\documentclass[12 pt,a4paper]{article}
\usepackage{graphicx} 
\usepackage{amsfonts}
\usepackage{amsmath}
\usepackage{amssymb}
\usepackage{amsthm}
\usepackage{tikz-cd}
\usepackage{mathrsfs}
\usepackage{float}
\usepackage{subfigure}
\usepackage{hyperref}
\usepackage{xcolor}
\usepackage{wrapfig}
\usepackage{geometry}
\usepackage{bm}
\usepackage{hyperref}
\hypersetup{colorlinks=true,
	linkcolor=red,
	anchorcolor=black,
	citecolor=blue,
	urlcolor=blue
}
\numberwithin{equation}{section}
\newtheorem{prop}{Proposition}
\newtheorem{thm}[prop]{Theorem}
\newtheorem{conj}[prop]{Conjecture}
\newtheorem{lemma}[prop]{Lemma}
\newtheorem{cor}[prop]{Corollary}

\theoremstyle{remark}
\newtheorem{rem}[prop]{Remark}

\DeclareMathOperator{\supp}{supp}

\def\R{{\mathbb R}}

\def\0{{\mathbf 0}}

\def\we{{\mathbf w}}

\def\vp{{\varphi}}
\newcommand{\qqs}{\mkern1mu}

\newcommand{\qqss}{\mkern2mu}

\def\www{{\widehat{\mathbf w}}}
\def\ooo{{\tilde{\omega}}}
\def\om{{\omega}}

\def\0{{\mathbf 0}}

\def\RRR{{\mathbb R}^2_{+}}
\def\BT{{B^1_{t_0}}}

\def\XXint#1#2#3{{\setbox0=\hbox{$#1{#2#3}{\int}$ }
\vcenter{\hbox{$#2#3$ }}\kern-.6\wd0}}

\begin{document}
\title{Sharp Basic Velocity Estimates for the Plane Steady Navier--Stokes Equations through Vorticity}
\author{Mikhail Korobkov\footnotemark[2]  \and Zhixiang Zhuang\footnotemark[3]}
\renewcommand{\thefootnote}{\fnsymbol{footnote}}
\footnotetext[2]{
School of Mathematical Sciences, Fudan University, Shanghai 200433, P.R.China. \newline email: korob@math.nsc.ru}
\footnotetext[3]{School of Mathematical Sciences, Fudan University, Shanghai 200433, P.R.China. \newline email: zxzhuang25@m.fudan.edu.cn}
\date{ }

\maketitle

\begin{abstract}We consider some new velocity estimates for general steady Navier-Stokes solutions in plane domains. 
Namely, we get some optimal estimates of   the difference between mean values of the velocity over two concentric circles in terms of the $L^2$-norm of the vorticity in the annulus between the circles. 
The sharpness of the getting estimates is demonstrated by some unexpected counterexamples.\end{abstract}

\section{Introduction}
In this paper, we study the stationary Navier--Stokes equations 
\begin{equation}  \tag{NS}\label{NS}
\left\{
\begin{aligned}
 & - \Delta \mathbf{w} + (\mathbf{w} \cdot \nabla) \mathbf{w} + \nabla p = \mathbf0 \ \ \mathrm{in} \ \Omega,\\
 & \nabla \cdot \mathbf{w} = 0 \ \ \mathrm{in} \ \Omega\\
\end{aligned}
\right.
\end{equation} 
in a plane domain (open connected set)  $\Omega\subset \R^2$,  which may be unbounded. Here $\we, p$ are the unknown velocity and pressure fields of the fluid.  
More specifically, we consider  $D$-solutions $\we$ to~(\ref{NS}) in $\Omega$, i.e., solutions with finite Dirichlet integrals 
\begin{equation} \label{unif-D-1}
\int_{\Omega} |\nabla \we|^2 < + \infty.
\end{equation}
Because of standard elliptic estimates, such solutions are $C^\infty$-smooth, and, moreover, real-analytic in $\Omega$. 

Still there are a lot of open problems here, for example, the famous {\it flow around an~obstacle problem}, solved for 3d case almost a century ago, but still open for 2d case for big Reynolds numbers (see, e.g.,  (\ref{NSobs}) for the formulation and \cite{KR23} for a~recent survey). Professor V.I.Yudovich ranked this problem second~--- immediately after the Millennium Problems~--- in his remarkable Cambridge lecture on the most important open problems in mathematical hydrodynamics (see~\cite{Yud03}\,). 

One reason why these exterior 2d problems are, perhaps paradoxically, much more difficult than their 3d counterparts is that the finiteness of the Dirichlet integral in the plane does not imply boundedness of the function, not even in an~integral-mean sense.
For instance, for  $f(z)=\bigl(\ln(2+|z|)\bigr)^{\frac13}$ we have $\int\limits_{\R^2}|\nabla f|^2<\infty$, but $f(z)\to+\infty$ as $|z|\to\infty$. This is in sharp contrast to the 3d case, where by Sobolev Embedding theorem one has
$$\biggl(\ \int\limits_{\R^3}|\nabla f|^2<\infty\biggr)\Rightarrow \biggl(\ \int\limits_{\R^3}|f-c|^6<\infty\biggr)$$ for some constant $c\in\R$. Really, the lack of the corresponding Embedding Theorem bears the main responsibility for the difficulties in the flow around an obstacle problem~(\ref{NSobs}) in 2d case and in the so called 'Stokes paradox' (see, e.g., \cite{R10}). Nevertheless, D.~Gilbarg and H.F.~Weinberger proved in~\cite{GW78} the uniform boundedness and the uniform convergence of the pressure for arbitrary $D$-solution. In particular, they established the following 
elegant estimate.

\begin{lemma}[\cite{GW78}] \label{lem-pmean}
{\sl
Let $\we$ be a $D$-solution to the Navier--Stokes equations \eqref{NS} in the annulus $ \Omega_{r_{1},r_{2}} =  \{z \in \R^2: r_1<|z|< r_2\}$. Then for the corresponding pressure $p$ we have
\begin{equation} \label{pmean}
|\bar{p}(r_2) - \bar{p}(r_1)| \le \frac{1}{2\pi} \int\limits_{\Omega_{r_1, r_2}} |\nabla \mathbf{w}|^2
\end{equation}
 where 
 $$\bar p(r)=\frac1{2\pi r}\int\limits_{|z|=r}p(z)\,ds$$
 is the mean value of the pressure~$p$ over the circle $S_r=\{z\in\R^2:|z|=r\}$.}
\end{lemma}

This is a really remarkable  result since under general assumption 
$f \in W^{1,2}(\Omega_{r_1,r_2})$   only the simple weaker estimate 
\begin{equation} \label{bardiff}
 |\bar{f}(r_2) - \bar{f}(r_1)| \le  \sqrt{\frac{1}{2\pi}\ln\frac{r_2}{r_1}}\left(\int\limits_{\Omega_{r_1, r_2}} |\nabla{f}|^2\right)^\frac12.
\end{equation}
holds with the logarithmic coefficient going to $+\infty$ as $\frac{r_2}{r_1}\to\infty$. 

The inequality (\ref{bardiff}) demonstrates the well-known fact: the $D$-function in general case may have a~logarithmic growth (for example, $f(z)=\bigl(\ln(2+|z|)\bigr)^{\alpha}$ with $\alpha\in (0,\frac12)$\,). 
Nevertheless, the brilliant structures of Navies--Stokes system allow to deduce much better estimates. The following result was obtained just recently. 

\begin{thm}[\cite{GKR23}, see also \cite{KR23,KR24}]\label{first basic estimate in annulus theorem}
{\sl Let $\mathbf{w}$ be the solution to the Navier-Stokes equations in the annulus $\Omega_{r_1,r_2}=\{r_1\le |z|\le r_2\}$. Then \begin{equation}\label{in:estim-m1}
|\overline{\mathbf{w}}(r_1)-\overline{\mathbf{w}}(r_2)|\le C\sqrt{\ln(2+\mu)\,D},
\end{equation} where 
$$\mu=\frac{1}{r_1m},\quad m=\max\limits_{r\in[r_1,r_2]}|\overline{\mathbf{w}}(r)|,\quad D=D_{r_1,r_2}=\int_{\Omega_{r_1,r_2}}|\nabla \mathbf{w}|^2,$$ and $C$ is some universal positive constant.}
\end{thm}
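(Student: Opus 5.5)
We work in polar coordinates and track the evolution of $\overline{\we}(r)$. Write the mean over $S_r$ as $\overline{\we}(r)=\frac{1}{2\pi}\int_0^{2\pi}\we(r,\theta)\,d\theta$. Differentiating gives $\frac{d}{dr}\overline{\we}(r)=\frac{1}{2\pi}\int_0^{2\pi}\partial_r\we\,d\theta$. By Cauchy--Schwarz, $\int_{r_1}^{r_2}|\overline{\we}'|\,dr\le \sqrt{\frac{1}{2\pi}\ln\frac{r_2}{r_1}}\sqrt{D}$, which is exactly \eqref{bardiff} with its bad logarithm. The task is to trade $\ln\frac{r_2}{r_1}$ for $\ln(2+\mu)$. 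We therefore split $[r_1,r_2]$ into an \emph{inner} part $[r_1,\min(r_2,R_*)]$, where $R_*=C_0/m$, and an \emph{outer} part $[R_*,r_2]$. On the inner part, \eqref{bardiff} already gives $\sqrt{\ln(R_*/r_1)\,D}\lesssim\sqrt{\ln(2+\mu)D}$, since $R_*/r_1=C_0\mu$. So everything reduces to proving, on the outer part where $rm\ge C_0$, a bound $|\overline{\we}(r_2)-\overline{\we}(R_*)|\le C\sqrt D$ with no logarithm. By rescaling $\we_\lambda(z)=\lambda\we(\lambda z)$, which preserves $D$ and the product $rm$, we may normalise as convenient.

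For the outer region the plan is to use the equation itself, as Gilbarg--Weinberger do for the pressure in Lemma~\ref{lem-pmean}. Average the momentum equation in $\theta$. Using $\nabla\cdot\we=0$ and writing $(\we\cdot\nabla)\we=\nabla\frac{|\we|^2}{2}-\omega\we^{\perp}$, the angular averages of the derivatives of the total head $\Phi=p+\frac12|\we|^2$ and of the velocity satisfy ODE-type identities in $r$. We will express $\overline{\we}'(r)$ through terms such as $\frac1r\overline{\omega\,\we^\perp}$-type quantities and derivatives of means of $\nabla\we$. On each dyadic annulus $A_k=\{2^k\le|z|\le 2^{k+1}\}$ we then want an estimate of the form
\[
|\overline{\we}(2^{k+1})-\overline{\we}(2^k)|\lesssim \frac{D_k}{2^k|\overline{\we}|}+\text{(oscillation terms)},
\]
with $D_k=\int_{A_k}|\nabla\we|^2$. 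The heuristic behind it: when $|\overline{\we}|\,r\gg1$ the flow on the annulus is a large-Reynolds-number perturbation of a uniform flow, so the linearised (Oseen-type) convection $\overline{\we}\cdot\nabla$ dominates and converts gradient information into velocity variation with a gain of $(r|\overline{\we}|)^{-1}$. The fluctuation $\we-\overline{\we}$ on $A_k$ is bounded via Poincaré/Wirtinger by $\sqrt{D_k}$. Summing over $k$, and using that $|\overline{\we}|$ stays comparable to $m$ on a stretch of radii, gives the sum $\sum_k D_k/(2^k m)\lesssim D/(R_*m)\lesssim D$. This is not yet $\sqrt D$, so we must combine it with the trivial per-annulus bound $\sqrt{D_k}$. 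For each $k$ we take the better of the two bounds, $\min\bigl(\sqrt{D_k},\,D_k/(2^km)+\dots\bigr)$, and sum using $2^km\ge C_0 2^{k-k_*}$, where $k_*$ is the dyadic index of $R_*$. The geometric decay of the second bound then absorbs the logarithm and leaves $C\sqrt D$.

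The main obstacle is the per-annulus gain estimate, that is, controlling the nonlinear term. The quadratic fluctuation $(\we-\overline{\we})\cdot\nabla(\we-\overline{\we})$ is only $O(D_k)$, not small relative to the Oseen part, and $|\overline{\we}(r)|$ is only bounded above by $m$, not below. Regions where $|\overline{\we}|\ll m$ must be handled by a separate bootstrap: if $\overline{\we}$ falls from size $m$ to size much less than $m$ across the outer region, the total variation must exceed $m/2$. We would first try to show this forces $D\gtrsim m^2$, which already yields the claim since then $m\lesssim\sqrt D$ and the total difference is at most $2m$. The intended tool is a Gilbarg--Weinberger style monotonicity of the head pressure $\Phi$ combined with Lemma~\ref{lem-pmean}, which bounds pressure-mean differences by $D/(2\pi)$, so that velocity differences are tied to $\sqrt{\text{pressure differences}}\lesssim\sqrt D$.
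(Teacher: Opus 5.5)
The paper does not prove this statement: it is quoted from \cite{GKR23} and used as a black box (the authors stress that all the non-triviality of their new estimates is hidden in it). So there is no in-paper argument to compare with, and your proposal has to stand on its own. It does not.

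Your reduction to radii with $rm\ge C_0$ via (\ref{bardiff}) is fine. But the remaining claim, a bound $C\sqrt D$ with no logarithm there, \emph{is} the theorem, and for it you give only a heuristic. The per-annulus Oseen ``gain'' is never derived. You yourself observe that the quadratic fluctuation term is of the same size $D_k$ as the quantity to be bounded, and that is exactly where an argument is required.

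Moreover, the displayed gain has the wrong homogeneity. The rescaling $\we_\lambda(z)=\lambda\we(\lambda z)$ does \emph{not} preserve $D$; it multiplies it by $\lambda^2$. Meanwhile $2^k|\overline{\we}|$ is invariant and the left-hand side scales like $\lambda$. So $|\overline{\we}(2^{k+1})-\overline{\we}(2^k)|\lesssim D_k/(2^k|\overline{\we}|)$ cannot hold with a universal constant (take $\lambda=2^{-n}\to0$). The summation inherits the defect, since it compares $D$ with $\sqrt D$. In general $\sum_k\min\bigl(\sqrt{D_k},\,D_k2^{-(k-k_*)}/C_0\bigr)$ is only $O\bigl(\sqrt{D\ln(2+D)}\bigr)$, and equal $D_k$ on about $\frac13\log_2 D$ consecutive annuli realise this. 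So the min-trick merely trades $\ln\frac{r_2}{r_1}$ for the non-invariant $\ln D$. A homogeneity-consistent gain such as $\sqrt{D_k}/(2^k|\overline{\we}|)$ would sum to $C\sqrt D$ directly, but establishing any such per-annulus bound is the entire difficulty.

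Your fallback for radii where $|\overline{\we}|\ll m$ also fails as proposed. This situation can occur on $[R_*,r_2]$, since $m$ may be attained in $[r_1,R_*]$. Lemma~\ref{lem-pmean} controls $\bar p$, and the head pressure $\Phi=p+\frac12|\we|^2$ adds no information on $|\overline{\we}|$. By (\ref{NS}), $\frac{d}{dr}\overline{\Phi}(r)$ is, up to a sign, the average over $\theta$ of $\omega\,\we\cdot\frac{z^\bot}{|z|}$. After splitting $\we=\overline{\we}(r)+(\we-\overline{\we}(r))$, its linear part is exactly $\overline{\we}\cdot\frac{d}{dr}\overline{\we}=\frac12\frac{d}{dr}|\overline{\we}|^2$. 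So the identity is tautological with respect to $|\overline{\we}|$, and bounding it directly gives a term of order $m\sqrt{D\ln(r_2/R_*)}$, which is the logarithm you want to remove.

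Nothing in the mechanism ``velocity differences $\lesssim\sqrt{\text{pressure differences}}$'' uses $rm\ge C_0$, and without that it is false. In (\ref{NSobs}) with small $\lambda$, $|\overline{\we}|^2$ changes by $\lambda^2$, while $\bar p$ changes by at most $D/2\pi\lesssim\lambda^2/\ln\frac1\lambda$ by (\ref{in:estim-small-l}). The mechanism is also blind to a rotation of $\overline{\we}$ at constant modulus.

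This side issue has an elementary fix, namely the reduction in the proof of Theorem~\ref{control by omega in annulus}: pass to a sub-annulus on which $|\overline{\we}|\ge m/5$ and across which $\overline{\we}$ changes by at least $\frac45m$. After it, what remains is to prove $|\overline{\we}(b)-\overline{\we}(a)|\le C\sqrt{D}$ on annuli where $r|\overline{\we}(r)|\ge c_0$ throughout. That is the actual content of \cite{GKR23}, and the proposal does not supply it.
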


In our previous papers it was called ``the first basic estimate for the velocity". The estimate~(\ref{in:estim-m1}) looks similar to~(\ref{bardiff}) (both contain logarithmic factor), but really it has the different nature:
$\mu(r_1,r_2)$ does {\bf not} go to $+\infty$ as $r_2\to\infty$!  \ Thus, for $r_2 \gg r_1$, \eqref{in:estim-m1} significantly improves~(\ref{bardiff}) obtained  without using the Navier-Stokes equations. Moreover, the estimate~\eqref{in:estim-m1} is precise and can not be improved in general. Indeed, for a solution 
 to  the {\it flow around an obstacle problem}  
 \begin{equation}  \tag{OBS}\label{NSobs}
\left\{
\begin{aligned}
 & - \Delta \mathbf{w} + (\mathbf{w} \cdot \nabla) \mathbf{w} + \nabla p = \mathbf0 \ \ \mathrm{in} \ \Omega,\\
 & \nabla \cdot \mathbf{w} = 0 \ \ \mathrm{in} \ \Omega,\\
 & \mathbf{w}|_{\partial \Omega} = \mathbf0, \\
 & \mathbf{w} \to\mathbf{w}_\infty=\lambda \mathbf{e}_1 \ \ \text{as}\ \  |z| \to \infty.
\end{aligned}
\right.
\end{equation} 
in the complement to the unit ball $\Omega=\Omega_{1,\infty}=\R^2\setminus\overline{B}_1$ in the~case of small $\lambda$ the opposite inequality
 \begin{equation} \label{in:estim-small-l}\left(\int\limits_\Omega|\nabla\we|^2\le C\,\frac1{\ln(2+\frac1\lambda)}\lambda^2\right)\Leftrightarrow \biggl(\sqrt{\ln(2+\mu)\,D_{1,\infty}}\le C\,\bigl|\overline{\mathbf{w}}(1)-\overline{\mathbf{w}}(\infty)\bigr|\,\biggr)
  \end{equation}
 holds with $C=C(\Omega)$ (see~\cite{FS67,KR21}$\qqss$). In other words, the inequality~\eqref{in:estim-m1} has a~sharp form and the logarithmic factor there cannot be omitted in general. 
 
Nevertheless, (\ref{in:estim-m1}) can be improved under some additional assumptions, for example, if $\mathbf{w}$ is a solution in the whole disk $B_{r_2}$, then the logarithmic factor in (\ref{in:estim-m1}) can be dropped:

\begin{thm}[\cite{KR-b25}]\label{first basic estimate in balls theorem}{\sl
Let $\mathbf{w}$ be a D-solution to the Navier-Stokes equations in $B_{R}$, then for any $\Omega_{r_1,r_2}$ with $0\le r_1<r_2\le R$, there holds 
\begin{equation}\label{i-basic-estimate}
|\overline{\mathbf{w}}(r_2)-\overline{\mathbf{w}}(r_1)|\le C\,\sqrt{D_{r_1,r_2}} \,,
\end{equation}
where $C$ is a universal positive constant.}
\end{thm}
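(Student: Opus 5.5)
Since $\we$ is a $D$-solution in the whole disk $B_R$, it is real-analytic up to the origin. We therefore have the full Bernoulli/vorticity structure at our disposal, with no "hole" in the domain. The plan is to express the derivative of the circular mean $\overline{\we}(r)$ through the vorticity $\omega=\partial_1 w_2-\partial_2 w_1$ and the pressure, and then to bound the resulting integrals by $D_{r_1,r_2}$ with no logarithmic loss. The starting point is the identity
\[
\frac{d}{dr}\overline{\we}(r)=\frac1{2\pi r}\int_{S_r}\partial_r\we\,ds .
\]
Using $\operatorname{div}\we=0$ and the definition of $\omega$, we rewrite the radial derivative, modulo tangential derivatives that integrate to zero on $S_r$, in terms of $\omega$. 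In complex notation $f=w_1-iw_2$, the function $\partial_{\bar z}f$ is (up to a constant) $i\omega/2$, because the divergence vanishes. The Cauchy--Pompeiu formula then gives
\[
\overline{\we}(r_2)-\overline{\we}(r_1)\ \sim\ \frac1{\pi}\int_{\Omega_{r_1,r_2}}\frac{\omega(z)}{\bar z}\,dx\qquad(\text{up to conjugation/constant}).
\]
The Cauchy--Schwarz inequality alone would produce the bad factor $\sqrt{\ln(r_2/r_1)}$. So the whole task is to show that the vorticity, weighted by $1/|z|$, has no logarithmically divergent contribution.

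The key step uses the vorticity equation $\Delta\omega=\we\cdot\nabla\omega$ together with the Bernoulli function $\Phi=p+\frac12|\we|^2$, which satisfies $\nabla\Phi=-\nabla^\perp\omega+\omega\we^\perp$. Since we work in a disk, I will use Theorem~\ref{first basic estimate in annulus theorem} (with $r_1$ replaced by small radii) and the analyticity at $0$ to handle the region near the origin. The decomposition I plan is to split dyadically into annuli $\Omega_{2^k,2^{k+1}}$. On each annulus, I compare $\overline{\we}$ with the quantity $\sqrt{D_k}$ of that annulus. Then I sum, using a decay/monotonicity property of $\Phi$ and the Gilbarg--Weinberger bound of Lemma~\ref{lem-pmean} for the pressure means, so that the contributions telescope rather than add up in the $\ell^1$ sense. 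Concretely, I expect an inequality of the form
\[
\Bigl|\frac{d}{dr}\overline{\we}(r)\Bigr|^2\le \frac{C}{r}\,\frac{d}{dr}\Bigl(\int_{B_r}|\nabla\we|^2\Bigr)\cdot g(r),\qquad \int g(r)\,\frac{dr}{r}<C .
\]
In this inequality the integrability of $g$ comes from the fact that the disk contains no boundary contributions: the maximum principle for $\Phi$ (Gilbarg--Weinberger) gives monotone behaviour of $\max_{S_r}\Phi$ with respect to $r$.

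Alternatively, and more robustly, I would reduce the theorem to Theorem~\ref{first basic estimate in annulus theorem} by a case analysis on $\mu=1/(r_1m)$. If $\mu\le C$, the logarithm is bounded and we are done. If $\mu$ is large, then $m\,r_1$ is small, which means the flow is "Stokes-like" on scale $r_1$. In that case the solution in the full disk $B_{r_1}$ is controlled by the linear Stokes theory: the mean of a Stokes solution in a disk has no logarithmic term, since the logarithmic Stokes fundamental-solution mode would require a net force, and a disk contains no obstacle to exert one. To make this rigorous, I would show that the total force $\int_{S_r}(T\nu-(\we\cdot\nu)\we)\,ds$ vanishes for every $r$, because the domain is a disk. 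This vanishing kills the logarithmic mode in the expansion of $\overline{\we}$.

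The main obstacle is this last step: quantitatively turning the zero-force condition into an estimate without a $\ln\mu$ loss, uniformly in the nonlinear regime. I would try to bound the force term by $C\sqrt{D}$ times a scale-invariant quantity via Lemma~\ref{lem-pmean}.
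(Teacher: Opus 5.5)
The paper quotes this theorem from \cite{KR-b25} and gives no proof of it, so your proposal has to stand on its own. It does not: neither route gets past its key step.

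\textbf{First route.} Everything rests on the ``expected'' inequality $|\frac{d}{dr}\overline{\mathbf w}(r)|^2\le \frac{C}{r}D'(r)\,g(r)$ with $\int g\,\frac{dr}{r}\le C$, where $D(r)=\int_{B_r}|\nabla\mathbf w|^2$. You give no derivation. The monotonicity of $\max_{S_r}\Phi$ and the Gilbarg--Weinberger bound for $\overline p$ are zeroth-mode information, whereas $\frac{d}{dr}\overline{\mathbf w}(r)$ is the first Fourier mode of $\omega$ on $S_r$. In fact no such inequality can hold with a universal bound on $\int g\,dr/r$. For the Stokes flow $\mathbf w_S=\nabla^\perp(r^3\cos\theta)=(-2xy,\,3x^2+y^2)$ one has $\overline{\mathbf w}_S(r)=(0,2r^2)$, hence $|\frac{d}{dr}\overline{\mathbf w}_S|^2=16r^2$, while $\frac1r D'(r)=48\pi r^2$. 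So Cauchy--Schwarz is sharp up to a fixed factor at every scale. For small $\epsilon$, the Navier--Stokes solution in $B_1$ with boundary data $\epsilon\mathbf w_S$ equals $\epsilon\mathbf w_S+O(\epsilon^2)$ in $C^2(\overline{B_{1/2}})$. Hence the same ratio stays bounded below on $[\epsilon,\frac12]$, which forces $\int g\,\frac{dr}{r}\gtrsim\ln\frac1\epsilon$. The theorem survives in this example not because individual dyadic contributions are small. It survives because $|\overline{\mathbf w}(r)-\mathbf w(0)|$ and $\sqrt{D(r)}$ both grow like $r^2$, so the dyadic sum is geometric. What has to be ruled out is a drift of the mean with roughly constant energy per dyadic annulus, i.e.\ the Stokeslet mode $r\ln r\cos\theta$ of the stream function. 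A scale-by-scale inequality of your type cannot capture that.

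\textbf{Second route.} This identifies the right mechanism (no obstacle, hence no Stokeslet), but it stops exactly where the proof would have to begin, as you say yourself. The identity $\int_{S_r}(T\nu-(\mathbf w\cdot\nu)\mathbf w)\,ds=0$ is a one-line consequence of the divergence theorem. It does not kill the logarithmic mode for Navier--Stokes. It only says that the viscous force $\int_{S_r}T\nu\,ds$, which sets the Stokeslet coefficient, equals the momentum flux $\int_{S_r}(\mathbf w\cdot\nu)\mathbf w\,ds$. That flux is nonzero and quadratic, and it depends on $\mathbf w$ itself (including $\overline{\mathbf w}(r)$, of size up to $m$), not only on $\nabla\mathbf w$. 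Lemma~\ref{lem-pmean} does not help here: $\int_{S_r}p\,\nu\,ds$ sees only the first Fourier mode of $p$, and $\overline p(r)$ drops out.

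A proof along these lines would need three things. First, a quantitative bound on this flux. Second, a quantitative Stokes-mode analysis showing that the induced drift of $\overline{\mathbf w}$ is geometrically summable over the scales $r_1\le r\lesssim 1/m$. Third, a gluing at $r\sim 1/m$ with Theorem~\ref{first basic estimate in annulus theorem}. That gluing also requires stabilizing $m$, which is a maximum over the whole interval, as in the proof of Theorem~\ref{control by omega in annulus}.

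Two further points in your case split are off. The logarithm in Theorem~\ref{first basic estimate in annulus theorem} accumulates over that whole range of scales, not at the single scale $r_1$. And for $r_1=0$ one has $\mu=\infty$, so the entire burden falls on this unproved step. As written, the proposal is a heuristic, not a proof.
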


At first glance, the above two estimates look quite optimal and sharp. But this first impression 
    is deceptive for the following reason: the~estimates (\ref{in:estim-m1}), (\ref{i-basic-estimate}) control the mean value difference in terms of the $L^2$-norm of the {\it entire} velocity gradient. But in fact, for our solenoidal vector-field $\we=(w_1,w_2)$ only part of the gradient, namely, the vorticity
$$\omega(z)=\partial_yw_{1}-\partial_xw_{2},$$ is responsible for such a mean value difference! Indeed, by a classical formula, for any divergence-free D-function $\mathbf{w}$ in annulus $\Omega_{r_1,r_2}$, we have 
\begin{equation}\label{representation of the difference of the average by omega}
	\overline{\mathbf{w}}(\rho_2)-\overline{\mathbf{w}}(\rho_1)=\frac{1}{2\pi}\int_{\Omega_{\rho_1,\rho_2}}\frac{\omega(z)z^\bot}{|z|^2}
\end{equation}
(see, e.g., \cite[p.~388]{GW78} or \cite[p.206]{Sa99}; here for a point $z=(x,y)\in\R^2$ we denote $z^\bot=(x,y)^\bot=(-y,x)$).  This means that $|\overline{\mathbf{w}}(r_2)-\overline{\mathbf{w}}(r_1)|=0$ whenever $\omega=0$ in $\Omega_{r_1,r_2}$. However,  in (\ref{in:estim-m1}) and (\ref{i-basic-estimate}) right-hand side is not zero when $\omega=0$. Thus, the estimates (\ref{in:estim-m1}) and (\ref{i-basic-estimate})  are far from optimal when vorticity is relatively small.

The first two main results of the present paper state that the above estimates can be significantly strengthened to demonstrate the vorticity's leading role in controlling the mean value difference, in the case of balls and annulus respectively.

\begin{thm}\label{control by omega in balls}{\sl 
	Let $\mathbf{w}$ be a~solution to the Navier-Stokes equations in $B_R$, then for any  $\Omega_{r_1,r_2}=\{r_1\le |z|\le r_2\}$ with $0\le r_1<r_2\le R$, there holds 
\begin{equation}\label{eq:est-balls}|\overline{\mathbf{w}}(r_1)-\overline{\mathbf{w}}(r_2)|\le C\sqrt{\ln\left(2+\frac{D}{D_\omega}\right)D_\omega},
\end{equation}
	where $C$ is a universal positive constant and  $$D_\omega=\int_{\Omega_{r_1,r_2}}|\omega|^2,\quad\quad D=\int_{\Omega_{r_1,r_2}}|\nabla \mathbf{w}|^2.$$}
\end{thm}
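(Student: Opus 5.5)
The plan is to combine the representation formula \eqref{representation of the difference of the average by omega} with Theorem~\ref{first basic estimate in balls theorem}, splitting the annulus dyadically. Set $\varepsilon = D_\omega/D\le 1$; we may assume $D_\omega>0$, since $D_\omega\le D$ always holds because $|\omega|\le \sqrt2|\nabla\we|$, up to a harmless constant. The point is that \eqref{representation of the difference of the average by omega} gives, on any sub-annulus $\Omega_{\rho_1,\rho_2}$, by Cauchy--Schwarz,
\[
|\overline{\we}(\rho_2)-\overline{\we}(\rho_1)|\le \frac1{2\pi}\Bigl(\int_{\Omega_{\rho_1,\rho_2}}\frac{dz}{|z|^2}\Bigr)^{1/2}\Bigl(\int_{\Omega_{\rho_1,\rho_2}}|\omega|^2\Bigr)^{1/2}=\sqrt{\tfrac1{2\pi}\ln\tfrac{\rho_2}{\rho_1}}\,\Bigl(\int_{\Omega_{\rho_1,\rho_2}}|\omega|^2\Bigr)^{1/2},
\]
which is exactly \eqref{bardiff} with $\omega$ in place of $\nabla\we$. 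Theorem~\ref{first basic estimate in balls theorem} meanwhile gives the bound $C\sqrt{D_{\rho_1,\rho_2}}$ with no logarithm. So the strategy is: use the vorticity bound on the "thin" logarithmic scales and the full-gradient bound on the rest, choosing the split so that the pieces balance.

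Concretely, I would decompose $[r_1,r_2]$ into dyadic shells $\rho_k=2^k\rho_0$ (handling $r_1=0$ by starting at a small radius and letting it tend to $0$, or simply by treating the disk $B_{\rho_0}$ separately with Theorem~\ref{first basic estimate in balls theorem}). Let $D_k$ and $D_{\omega,k}$ be the Dirichlet and vorticity integrals on the $k$-th shell. Call a shell \emph{good} if $D_k\le N D_{\omega,k}$, where $N\sim\varepsilon^{-1}$ is a threshold to be tuned, and \emph{bad} otherwise. On consecutive runs of good shells, use the vorticity formula above. On bad shells, use Theorem~\ref{first basic estimate in balls theorem} shell by shell; this gives $\sum_{\mathrm{bad}}\sqrt{D_k}$. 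That sum is not directly controllable, however, since summing square roots loses a factor of the number of shells. The better approach is to group maximal consecutive runs of bad (respectively good) shells into single annuli and apply the corresponding estimate once per run.

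The main obstacle is that this naive grouping does not obviously produce only $\ln(2+D/D_\omega)$. Runs of good shells of length $L$ cost $\sqrt{L\,D_\omega}$, and there is no a priori bound on $L$. So the dichotomy must instead be made at a single scale level. I would try: the number of shells where $D_{\omega,k}\ge \delta D_\omega$ is at most $1/\delta$. The real mechanism, however, should be to apply Theorem~\ref{first basic estimate in balls theorem} on the whole annulus, but to the velocity minus a suitable irrotational part. Alternatively, one could use the fact that the total $\omega$-contribution with weight $|z|^{-1}$ concentrates on about $\ln(D/D_\omega)$ scales, and control the tails by a decay estimate of vorticity: for NS in balls, $\omega$ satisfies $-\Delta\omega+\we\cdot\nabla\omega=0$, so it enjoys a maximum principle and monotonicity-type bounds. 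I would first try to prove a lemma of the form: the part of $\int|\omega|^2$ on shells at logarithmic distance greater than $\ln(D/D_\omega)$ from where $\omega$ is large decays geometrically, using the maximum principle for $\omega$ on circles, which has no interior extrema. Summing the Cauchy--Schwarz bound with geometric weights then yields $C\sqrt{\ln(2+D/D_\omega)\,D_\omega}$. This decay lemma is the step I expect to be hardest.
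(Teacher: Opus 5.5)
You have the right two ingredients: the Cauchy--Schwarz bound from \eqref{representation of the difference of the average by omega} and Theorem~\ref{first basic estimate in balls theorem}. But the proposal stops exactly where the proof has to happen. After you drop the good/bad shell bookkeeping, everything rests on a vorticity ``decay lemma'' derived from the maximum principle for $-\Delta\omega+\mathbf{w}\cdot\nabla\omega=0$. That lemma is neither formulated precisely nor proved, and the maximum principle cannot prove it. The inequality you want says, in effect, that vorticity acting coherently on $L$ dyadic shells forces $D\gtrsim e^{cL}D_\omega$. So the size of $D$ must enter the argument, and the maximum principle for $\omega$ is blind to $D$. The construction behind Theorem~\ref{f-conj} makes this concrete. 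It produces exact solutions whose vorticity is $L^2$-close to a multiple of $\sigma_f=f(xy)$, and the mass of that function on the $j$-th dyadic shell decays only like $1/j$ over roughly $\frac12\log_2(1/t_0)$ shells. These solutions are compatible with the theorem only because $D\sim\alpha_k^2$ is huge. Your other suggestion, applying Theorem~\ref{first basic estimate in balls theorem} to $\mathbf{w}$ minus an irrotational part, fails as stated, because that difference is not a Navier--Stokes solution.

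The missing idea is to localize the irrotational part of $\mathbf{w}$, not $\omega$. Write $\mathbf{w}=\mathbf{w}_A+\mathbf{w}_B$, where $\mathbf{w}_B$ is the Cauchy transform of $\omega$ (so $\|\nabla\mathbf{w}_B\|_{L^2}=\|\omega\|_{L^2}$) and $\mathbf{w}_A$ is holomorphic in the annulus with zero circular means. First make the easy reductions: $D>100D_\omega$ (otherwise apply Theorem~\ref{first basic estimate in balls theorem} directly) and $r_2/r_1>D/D_\omega$ (otherwise your Cauchy--Schwarz bound already suffices). The zero-mean property then gives $|\mathbf{w}_A|\le 4\sqrt{D_A}$ on $\Omega_{2r_1,r_2/2}$, see \eqref{wae}. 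The Cauchy formula for the derivative then yields $|\nabla\mathbf{w}_A(z)|\le C\sqrt{D}\,\bigl(r_1/|z|^2+1/r_2\bigr)$ for $4r_1<|z|<r_2/4$. So the energy of $\mathbf{w}_A$ sits in two boundary layers. Take $r_*=r_1\sqrt{D/D_\omega}$ and $r_{**}=r_2\sqrt{D_\omega/D}$; then $\int_{\Omega_{r_*,r_{**}}}|\nabla\mathbf{w}|^2\le CD\bigl((r_1/r_*)^2+(r_{**}/r_2)^2\bigr)+CD_\omega\le CD_\omega$. Hence Theorem~\ref{first basic estimate in balls theorem} on $\Omega_{r_*,r_{**}}$ costs only $C\sqrt{D_\omega}$. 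Each of the two layers has logarithmic width $\frac12\ln(D/D_\omega)$, plus the end pieces $[r_1,4r_1]$ and $[r_2/4,r_2]$ of bounded width. On these your Cauchy--Schwarz bound costs $C\sqrt{\ln(D/D_\omega)\,D_\omega}$. The scales on which the logarithm is paid are therefore fixed a priori next to the two boundary circles. They are determined by where a harmonic field with zero circular means can carry energy, not by where $\omega$ is concentrated.
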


\begin{thm}\label{control by omega in annulus}{\sl 
	Let $\mathbf{w}$ be a~solution to the Navier-Stokes equations in the annulus $\Omega_{r_1,r_2}=\{r_1\le |z|\le r_2\}$ with $0\le r_1<r_2\le R$, then  
\begin{equation}\label{eq:est-ann}|\overline{\mathbf{w}}(r_1)-\overline{\mathbf{w}}(r_2)|\le C\sqrt{\ln(2+\mu)\ln\left(2+\frac{D}{D_\omega}\right)D_\omega},
\end{equation}
where $C$ is a universal positive constant and $$\mu=\frac{1}{r_1m},\qquad m=\max\limits_{r\in[r_1,r_2]}|\overline{\mathbf{w}}(r)|,\qquad  D_\omega=\int_{\Omega_{r_1,r_2}}|\omega|^2,\quad\quad D=\int_{\Omega_{r_1,r_2}}|\nabla \mathbf{w}|^2.$$}
\end{thm}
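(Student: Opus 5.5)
The plan is to combine the representation formula \eqref{representation of the difference of the average by omega} with the annulus estimate of Theorem~\ref{first basic estimate in annulus theorem}. The key observation is that \eqref{representation of the difference of the average by omega} together with Cauchy--Schwarz gives, for any subannulus $\Omega_{\rho_1,\rho_2}\subset\Omega_{r_1,r_2}$,
$$|\overline{\mathbf{w}}(\rho_2)-\overline{\mathbf{w}}(\rho_1)|\le \frac{1}{2\pi}\Bigl(\int_{\Omega_{\rho_1,\rho_2}}\frac{dz}{|z|^2}\Bigr)^{1/2}\Bigl(\int_{\Omega_{\rho_1,\rho_2}}|\omega|^2\Bigr)^{1/2}\le \sqrt{\tfrac{1}{2\pi}\ln\tfrac{\rho_2}{\rho_1}}\,\sqrt{D_\omega}.$$
This is useful on \emph{thin} annuli, where the ratio $\rho_2/\rho_1$ is moderate. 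On \emph{thick} annuli we instead use Theorem~\ref{first basic estimate in annulus theorem}, which is insensitive to $\rho_2/\rho_1$ but pays the full Dirichlet integral. The task is to split $[r_1,r_2]$ so that each of the two bounds is used only where it is cheap.

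Concretely, set $\lambda=\ln(2+D/D_\omega)$ and decompose $[r_1,r_2]$ dyadically in $\ln r$ into consecutive blocks $[\rho_{k},\rho_{k+1}]$ with $\rho_{k+1}/\rho_k=e^{\lambda}$ (the last block possibly shorter). For each block we want the bound
$$|\overline{\mathbf{w}}(\rho_{k+1})-\overline{\mathbf{w}}(\rho_k)|\lesssim\sqrt{\lambda\,\ln(2+\mu)\,D_\omega^{(k)}},$$
where $D_\omega^{(k)}$ is the vorticity energy in that block. Summing these bounds naively with Cauchy--Schwarz over blocks would introduce a factor equal to the square root of the number of blocks, which is unacceptable. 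So the blocks must instead be grouped by a more careful scheme.

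I would therefore run a selection argument. Call a block ``good'' if its full Dirichlet integral satisfies $D^{(k)}\le e^{\lambda}D_\omega^{(k)}$, roughly the ratio $D/D_\omega$, and ``bad'' otherwise. Since $\sum_k D^{(k)}=D\approx e^{\lambda}D_\omega$, the total $D$-weight carried by bad blocks is controlled by their vorticity share. On consecutive runs of good blocks, apply Theorem~\ref{first basic estimate in annulus theorem} to the merged annulus. This gives $\sqrt{\ln(2+\mu)\,D^{\rm run}}\le\sqrt{\ln(2+\mu)\,e^{\lambda}D_\omega^{\rm run}}$, which is too large by the factor $e^\lambda$ rather than $\lambda$. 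So the threshold should instead be chosen as $D^{(k)}\le \lambda D^{(k)}_\omega$. On a bad block with $D^{(k)}>\lambda D_\omega^{(k)}$, use the thin-annulus Cauchy--Schwarz bound, which costs $\sqrt{\lambda D_\omega^{(k)}}$ per block. The difficulty is then summing over many bad blocks: their number is at most about $D/(\lambda D_\omega^{\min})$, so the per-block sum is not directly controlled.

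The main obstacle is exactly this summation: converting local block estimates into a global estimate without losing a factor equal to the number of blocks. My first attempt would avoid a block sum altogether. Write \eqref{representation of the difference of the average by omega} in polar form as $\frac{1}{2\pi}\int_{r_1}^{r_2}\frac{1}{r}\bigl(\int_{S_r}\omega\, e_\theta\,ds/r\bigr)\,dr$, and note that the inner angular Fourier mode $\omega_1(r)$ (the first Fourier coefficient of $\omega$ in $\theta$) is itself controlled via the Navier--Stokes equations. Following the mechanism behind Theorem~\ref{first basic estimate in annulus theorem}, $\omega_1$ should satisfy a decay estimate in $\ln r$ on scales where the mean velocity dominates. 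This gives a weight $\min(1,\cdot)$ and yields $\int \frac{dr}{r}|\omega_1|\lesssim \sqrt{\ln(2+\mu)\,\lambda\,D_\omega}$ after splitting the $\ln r$ range at length $\lambda$ and using $\int|\nabla\omega_1|$-type bounds, which are controlled by $D$, on the far part. I expect that deriving this effective decay of the first vorticity mode from the equations, with the correct $\ln(2+\mu)$ dependence, will be the hardest and most delicate step.
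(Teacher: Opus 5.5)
\paragraph{There is a genuine gap.} The proposal does not reach a proof, and the step it finally rests on is unjustified and partly wrong.

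Your thin/thick dichotomy is the right one. You also correctly see that the block-selection scheme breaks down: you have no information on how many ``bad'' blocks there are or where they sit, so summing local bounds loses a factor.

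Your replacement is a decay estimate for the first angular Fourier mode $\omega_1(r)$ of the vorticity, ``derived from the equations''. It is only asserted. It also aims at bounding the integral of $|\omega_1|$ over $[r_1,r_2]$, which is stronger than the theorem itself, since \eqref{representation of the difference of the average by omega} only involves the integral of $\omega_1$, with its cancellations. Moreover, the ingredient ``$\int|\nabla\omega_1|$-type bounds, controlled by $D$'' is false: $D$ controls $\omega$ in $L^2$, while $\nabla\omega$ involves second derivatives of $\mathbf{w}$, which $D$ does not control.

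Finally, you never address the parameter $\mu$. Theorem~\ref{first basic estimate in annulus theorem} applied on a sub-annulus $\Omega_{r',r''}$ carries $\mu'=1/(r'm')$ with $m'=\max_{[r',r'']}|\overline{\mathbf{w}}|$, and this may be far larger than $\mu$.

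\paragraph{The missing idea.} It is structural and needs no new PDE analysis; Theorem~\ref{first basic estimate in annulus theorem} is used as a black box.
\begin{itemize}
\item Split $\mathbf{w}=\mathbf{w}_A+\mathbf{w}_B$ as in \eqref{wa and wb}. By Lemma~\ref{wb estimate}, $\|\nabla\mathbf{w}_B\|_{L^2(\R^2)}=\|\omega\|_{L^2}$.
\item $\mathbf{w}_A$ is holomorphic with zero circle means. So Lemma~\ref{max w_A} and Cauchy's formula give $|\nabla\mathbf{w}_A(z)|\le C\sqrt{D_A}\,\bigl(r_1/|z|^2+1/r_2\bigr)$ on $\Omega_{4r_1,r_2/4}$.
\item Hence the excess energy $D-D_\omega$ lives in two boundary layers. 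With $r_*=\sqrt{D/D_\omega}\,r_1$ and $r_{**}=\sqrt{D_\omega/D}\,r_2$ one has $\int_{\Omega_{r_*,r_{**}}}|\nabla\mathbf{w}|^2\le C D_\omega$.
\item Three pieces then suffice, with no block sum. Use Cauchy--Schwarz on $\Omega_{r_1,r_*}$ and $\Omega_{r_{**},r_2}$, each of logarithmic width $\frac12\ln(D/D_\omega)$. Use Theorem~\ref{first basic estimate in annulus theorem} once on $\Omega_{r_*,r_{**}}$.
\end{itemize}

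\paragraph{Preliminary reductions.}
\begin{itemize}
\item If $D\le 100D_\omega$, apply Theorem~\ref{first basic estimate in annulus theorem} directly.
\item If $r_2/r_1\le D/D_\omega$, apply Cauchy--Schwarz directly.
\item Trim $\Omega_{r_1,4r_1}$ and $\Omega_{r_2/4,r_2}$, at cost at most $\sqrt{D_\omega}$.
\item To control $\mu'$, first pass, using Lemma~\ref{subd-mon}, to a sub-annulus between a maximum point of $|\overline{\mathbf{w}}|$ and the nearest radius where $|\overline{\mathbf{w}}|=m/5$. On it $|\overline{\mathbf{w}}|\ge m/5$, so $\mu'\le 5\mu$ for every further sub-annulus.
\end{itemize}
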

\newenvironment{proofof3}{\begin{proof}[\indent\underline{\sc Proof of Theorem \ref{control by omega in balls}}. \ \ \ ]}{\end{proof}}
\newenvironment{proofof4}{\begin{proof}[\indent\underline{\sc Proof of Theorem \ref{control by omega in annulus}}. \ \ \ ]}{\end{proof}}

\begin{cor}\label{cor-power-est}{\sl 
	For any $\delta\in (0,1)$ the estimates (\ref{in:estim-m1}) and (\ref{i-basic-estimate}) stay true, if in their right-hand sides we replace $D$ by 
$D_\omega^\delta \, D^{1-\delta}$ (and in this case the universal constants in the right hand sides will depend on $\delta$ only as well). }
\end{cor}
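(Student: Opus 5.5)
The corollary is a purely algebraic consequence of Theorems~\ref{control by omega in balls} and~\ref{control by omega in annulus}. No new analysis of the Navier--Stokes system is needed. The plan is to show that for every $\delta\in(0,1)$ there is a constant $C_\delta$ with
\begin{equation*}
\ln\Bigl(2+\frac{D}{D_\omega}\Bigr)D_\omega\le C_\delta\, D_\omega^{\delta}D^{1-\delta}.
\end{equation*}
Inserting this into \eqref{eq:est-balls} gives \eqref{i-basic-estimate} with $D$ replaced by $D_\omega^\delta D^{1-\delta}$. Inserting it into \eqref{eq:est-ann}, where the factor $\ln(2+\mu)$ is simply carried along, gives \eqref{in:estim-m1} with the same replacement. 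In both cases the constant becomes $C\sqrt{C_\delta}$, which depends only on $\delta$.

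First, dispose of degenerate cases. If $D_\omega=0$, then $\omega\equiv0$ in the annulus, and by the representation formula \eqref{representation of the difference of the average by omega} the mean values coincide, so both sides vanish. If $D=0$, then $D_\omega=0$ as well, because $|\omega|\le\sqrt2|\nabla\we|$ pointwise. Hence we may assume $0<D_\omega\le 2D$.

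Next, set $t=D/D_\omega\ge\frac12$. Dividing the desired inequality by $D_\omega$ reduces it to the elementary bound
\begin{equation*}
\ln(2+t)\le C_\delta\, t^{1-\delta}\qquad\text{for all } t\ge\tfrac12.
\end{equation*}
This holds because $\ln(2+t)/t^{1-\delta}$ is continuous on $[\frac12,\infty)$ and tends to $0$ as $t\to\infty$, so it is bounded. Explicitly, one can use $\ln(2+t)\le \ln 3+\ln(1+t)$ together with $\ln(1+t)\le \frac{1}{1-\delta}\,t^{1-\delta}$, the latter following from $\ln s\le \frac{s^{\varepsilon}}{\varepsilon}$ with $\varepsilon=1-\delta$. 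On $[\frac12,\infty)$ the constant term is absorbed, since $t^{1-\delta}\ge 2^{-1}$ there.

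No step is a real obstacle. The only point needing care is the lower bound $t\ge\frac12$, which is what lets the constant $\ln 3$ be absorbed into $C_\delta t^{1-\delta}$; it comes from the pointwise inequality $|\omega|^2\le 2|\nabla \we|^2$.
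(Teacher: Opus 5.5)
Your proposal is correct and follows the paper's intended argument: the corollary is stated without a separate proof, as an immediate consequence of Theorems~\ref{control by omega in balls}--\ref{control by omega in annulus} through the elementary bound $\ln(2+t)\le C_\delta t^{1-\delta}$ with $t=D/D_\omega$. Your observation that $t\ge\frac12$, because $|\omega|^2\le 2|\nabla\we|^2$, makes explicit the lower bound on $t$ that the paper leaves implicit. One small slip: $\ln(1+t)\le t^{\varepsilon}/\varepsilon$ does not literally follow from $\ln s\le s^{\varepsilon}/\varepsilon$ at $s=1+t$, but it is true, e.g.\ by integrating $\frac{1}{1+s}\le s^{\varepsilon-1}$ over $[0,t]$.
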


Noting that there is an additional  factor $\left(2+ \ln(D/D_\omega)\right)^{\frac{1}{2}}$ on the right hand side of the estimates in Theorems~\ref{control by omega in balls}--\ref{control by omega in annulus}. Now we'd like to ask whether this factor can be removed, i.e., whether the difference between mean values of the velocity $\mathbf{w}$ over two concentric circles can be controlled only in terms of $\omega$ only. In other words, a natural question arises about the validity of the following

\begin{conj}\label{conjecture}{\sl Let $\mathbf{w}$ be the solution to the Navier-Stokes equations in $B_1$.  Then for any $0\le r_1<r_2\le 1$, $\Omega_{r_1,r_2}=\{r_1\le |z|\le r_2\}$, there holds 
$$|\overline{\mathbf{w}}(r_2)-\overline{\mathbf{w}}(r_1)|\le C\left(\int_{\Omega_{r_1,r_2}}|\omega|^2\right)^{\frac{1}{2}},$$
where $C$ is a universal positive constant.}
\end{conj}

Since vorticity plays a key role in many classical relations of two-dimensional hydrodynamics (see, for example, \cite{GW78}), and by virtue of many previous positive results, it is natural to expect the validity of the above statement. 
Surprisingly, Conjecture \ref{conjecture} fails:

\newenvironment{proofof6}{\begin{proof}[\indent\underline{\sc Proof of Theorem \ref{f-conj}}:]}{\end{proof}}
\begin{thm}\label{f-conj}{\sl 
There exists a sequence of $\{\mathbf{w}_k\}$ satisfying (\ref{NS}) in  $\Omega=B_1$, such that \begin{equation}\label{for-conj-f}
\frac{\bigl|\overline{\mathbf{w}}_k(1)-\overline{\mathbf{w}}_k(0)\bigr|}{\|\omega_k\|_{L^2(B_1)}}	\ =\ \frac1{2\pi}\left|\int_{B_1}\frac{\omega_kz^\bot}{|z|^2}\right|\Big/\|\omega_k\|_{L^2(B_1)}\to\infty,\quad \text{\rm as \ }k\to\infty.
\end{equation}}
\end{thm}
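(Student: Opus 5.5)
The plan is to exploit scale invariance, so that it suffices to produce solutions whose vorticity mimics the non-square-integrable kernel $z^\bot/|z|^2$ over many dyadic scales. If $\we$ solves (\ref{NS}) in $B_R$, then $\we_R(z)=R\,\we(Rz)$ solves (\ref{NS}) in $B_1$, and the quotient in (\ref{for-conj-f}) for $\we_R$ on $B_1$ equals the same quotient for $\we$ on $B_R$. Pass to polar coordinates and write $\omega=\sum_n \omega_n(r)e^{in\theta}$. Only the first angular modes enter the numerator, namely $\int_0^R \omega_{\pm1}(r)\,dr$. The denominator dominates $\bigl(\int_0^R |\omega_{\pm1}|^2 r\,dr\bigr)^{1/2}$.

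So the goal is a family of solutions with $\omega_{\pm1}(r)\approx a/r$ on a range $\varepsilon<r<1$, while all other modes have $L^2$-norm of order at most $a\sqrt{\ln(1/\varepsilon)}$. Then the numerator is of order $a\ln\frac1\varepsilon$, the denominator is of order $a\sqrt{\ln\frac1\varepsilon}$, and the ratio tends to infinity like $\sqrt{\ln(1/\varepsilon)}$. This is exactly the mechanism that makes the logarithmic factor in Theorem~\ref{control by omega in balls} appear.

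A first observation rules out the naive route. For Stokes flows in $B_1$ the vorticity is harmonic, so $\omega_{\pm1}(r)=c\,r$. The quotient is then bounded by a universal constant. Since the quotient is homogeneous of degree zero in $\we$, small perturbations of Stokes flows obtained by the implicit function theorem are also useless. The counterexample must therefore be genuinely nonlinear, with large Reynolds number.

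I would look for solutions in which the first vorticity mode is transported by a strong background flow that turns the radial ODE for $\omega_{\pm1}$ into one with $1/r$-type solutions. Concretely, I would take a background swirl $\mathbf{w}_0=\vp(r)z^\bot$ that is smooth at the origin and behaves like a point vortex $\Gamma z^\bot/|z|^2$ on $\varepsilon<r<1$. Its own vorticity is radial, so it contributes nothing to the numerator. Its $L^2$-norm can be kept of order $\Gamma/\varepsilon$ only in the core $|z|<\varepsilon$, and that part must be compensated, for instance by letting the perturbation amplitude $a$ be large compared with $\Gamma/\varepsilon$.

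Linearizing the vorticity equation $\Delta\omega=\we\cdot\nabla\omega$ around $\mathbf{w}_0$ gives, for the mode $e^{i\theta}$, the equation $f''+f'/r-(1+i\Gamma)f/r^2=0$ in the vortex region. Its solutions are $r^{\pm s}$ with $s=\sqrt{1+i\Gamma}$. I would then tune the data (including possibly the sign of $\Gamma$, or a combination of modes) so that the resulting $\omega_{\pm1}$ has a large real $1/r$-like component on $(\varepsilon,1)$. Next I would close the nonlinear problem by a fixed point, treating $\we'\cdot\nabla\omega'$ as small relative to the background transport. Finally I would rescale as above and let $\varepsilon\to0$ along a sequence $k\to\infty$.

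The main obstacle is this last step: constructing an honest solution of (\ref{NS}) in the whole disk $B_1$, including the core near the origin, whose first vorticity mode really has the $1/r$ profile over a logarithmically long range. The difficulty is that the core of the swirl must be regularized, and one must make sure neither the core nor the nonlinear terms destroy the balance between the numerator and the denominator. If the linearized-swirl ansatz fails, I would instead try an explicit ansatz for the stream function of the form $\psi=r\,g(\ln r)\sin\theta+h(r)$, solve the resulting ODE system for $g$ and $h$ directly, and check the quotient numerically and then rigorously.
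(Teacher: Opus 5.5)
\paragraph{Verdict.} There is a genuine gap. Your reductions are sound: scale invariance, the fact that only the modes $e^{\pm i\theta}$ enter the numerator, and that $\om_{\pm1}\approx a/r$ is the Cauchy--Schwarz extremal profile. But you never produce a solution of the Navier--Stokes system in $B_1$ with such a profile, and that construction is the whole content of the theorem.

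\paragraph{Why the swirl route does not work.} The mechanism you sketch does not produce such a solution, for two reasons.

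First, by your own computation the first-mode solutions around the swirl are $r^{\pm s}$ with $s=\sqrt{1+i\Gamma}=p+iq$, where $p^2-q^2=1$ and $2pq=\Gamma$. So $p>1$ and $q\neq0$ for every $\Gamma\neq0$. Flipping the sign of $\Gamma$ only conjugates $s$, and a radial swirl does not couple different modes, so tuning cannot help. For the singular branch $f=r^{-s}$ you have $\bigl|\int_\varepsilon^1 f\,dr\bigr|\le(1+\varepsilon^{1-p})/|s-1|$, $\int_\varepsilon^1|f|^2r\,dr=(\varepsilon^{2-2p}-1)/(2p-2)$, and $|s-1|^2=2p(p-1)$. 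Hence once $\varepsilon^{1-p}\gg1$ (e.g.\ for $|\Gamma|\gtrsim1$) the quotient is $\lesssim1/\sqrt p\le1$. The swirl replaces $r^{-1}$ by a more singular profile oscillating in $\ln r$, which destroys the logarithmic gain. Without swirl the radial ODE already has the solution $r^{-1}$, so the swirl is not the missing ingredient. What is missing is a way for a solution regular at the origin to excite the singular branch with a large coefficient, and your proposal does not address this.

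Second, your compensation for the core is incompatible with the perturbative closing. To beat the core contribution $\Gamma/\varepsilon$ to $\|\om\|_{L^2}$ you need $a\ln\frac1\varepsilon\gg\Gamma/\varepsilon$. The velocity induced by $\om'\approx a\cos\theta/r$ is at least of order $a$ on most of the annulus, since its stream function contains $\frac a2\,r\ln\frac1r\cos\theta$. The swirl there is only $\Gamma/r$. So $|\we'|\gg|\we_0|$ wherever $r\gg\Gamma/a$, and $\Gamma/a\ll\varepsilon\ln\frac1\varepsilon$; this covers essentially the whole range. The term $\we'\cdot\nabla\om'$ is then the dominant transport, not a small correction, and the fixed-point argument has nothing to close around.

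\paragraph{The missing idea and how the paper proceeds.} The idea that makes the paper's proof work is to make the large background flow \emph{irrotational}, so that its strength costs nothing in $\|\om\|_{L^2}$.
\begin{enumerate}
\item Set-up: the paper takes $\we=\alpha\www_A+\we_B$ with $\www_A=\nabla^\bot(xy)$ and $\we_B$ the Cauchy transform of $\om$. It prescribes boundary data $\om_1=f(xy)$ supported in $\{xy>t_0\}$ in the first quadrant, so that $\www_A\cdot\nabla\om_1=0$.
\item Uniform bound: write $\om=\ooo+\om_1$ with $\ooo\in H^1_0(B_1)$ and test with $\ooo$. The $\alpha$-term vanishes because $\www_A$ is divergence free. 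This gives an $H^1$ bound independent of $\alpha$ when $\|\om_1\|_{L^2}\le C_*$. That smallness is free, since the quotient is invariant under $\om_1\mapsto c\,\om_1$. Leray--Schauder then gives existence.
\item Limit $\alpha\to\infty$: the limit is constant along the hyperbolas $xy=t$, each of which reaches $\partial B_1$, so it equals $\om_1$. Hence $\om_\alpha\to\om_1$ in every $L^q$, $q<\infty$, and both numerator and denominator converge to those of $\om_1$.
\item One-dimensional problem: by the coarea formula one compares $\int f(t)t^{-1/2}\,dt$ with $\bigl(\int f^2\ln\frac1t\,dt\bigr)^{1/2}$. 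The choice $f(t)=1/(\sqrt t\,\ln\frac1t)$ on $[t_0,\frac14]$ gives a quotient of order $\sqrt{\ln\ln(1/t_0)}\to\infty$.
\end{enumerate}
No vortex core and no linearized radial ODE are needed. The vorticity profile is imposed through the boundary data and frozen along streamlines by the strong strain.
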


This means that Conjecture \ref{conjecture} fails with $r_2=1$ and $r_1=0$ !

Let us say something concerning the proofs and the main ideas behind. The~estimates in both 'positive' Theorems~\ref{control by omega in balls}--\ref{control by omega in annulus} are highly nontrivial, of course (compare, e.g., with the~standard estimate~(\ref{bardiff}), which is quite sharp for general $D$-functions). 
Nevertheless, the derivation of these two new results from Theorems~\ref{first basic estimate in annulus theorem}--\ref{first basic estimate in balls theorem} is very simple; only some accurate combination of elementary arguments from real and harmonic analysis is required, all the non‑triviality of these results is ''hidden'' in the previously proven Theorems~\ref{first basic estimate in annulus theorem}--\ref{first basic estimate in balls theorem}. 

Finding a counterexample is more tricky, since it isn't  easy in general to construct {\it an exact} solution to the unforced Navier--Stokes system  with prescribed paradoxical properties. 
First of all, as the estimate is given in terms of $\omega$ only, we will focus on the properties of vorticity only, and will look for a solution to the following profile system:
\begin{equation}\label{vor-ns-form}
	\begin{cases}
		-\Delta\omega+(\alpha\www_A+\mathbf{w}_B)\cdot\nabla \omega=0,\quad &\text{in }B_1,\\
		\omega=\omega_1,&\text{on }\partial B_1,\\
		\www_A=\nabla^{\bot}\psi_A,\quad &\text{in }B_1,\\
		\mathbf{w}_B(z)=\frac1{2\pi i}\int_{B_1} \frac{\omega(\zeta)}{z-\zeta}\,d\xi d\eta, \quad\zeta=\xi+i\eta, \quad &\text{in }B_1.
	\end{cases}
\end{equation}
where $\psi_A$ is a fixed harmonic function, 
$\omega_1$ is some smooth function defined in $\overline{B}_1$ and constant along the level set of $\psi_A$, $\alpha$ is an arbitrary positive constant (will be chosen later), and we naturally associate a two-dimensional vector~$\mathbf{w}_B=(u_B,v_B)$ with a complex number
$\mathbf{w}_B=u_B-iv_B$. Under this agreement, for $\we=\alpha\www_A+\we_B$ \ a\,beautiful relation 
\begin{equation}\label{bar-z-der}
\partial_{\bar z}\we=\partial_{\bar z}\we_B=\frac{i\,\omega}2
\end{equation}
holds (see, e.g., \cite{GW78}), and it is easy to check, that $\we$ satisfies the Navier--Stokes system~(\ref{NS}).  

We prove that under some extra assumptions on $\omega_1$, there exists a~weak solution to (\ref{vor-ns-form}) with an~unexpected apriory bound:

\begin{thm}\label{ex-omega1}  {\sl There exist some universal positive constant $C_*, C$ such that 
for any harmonic function~$\psi_A\in C^2(\bar B_1)$, a~real parameter $\alpha\in\R$, and for any Sobolev  function~$\omega_1\in H^1(B_1)$ meeting
\begin{equation}\label{apr-omeg1}
		\|\omega_1\|_{L^2(B_1)}\le C_*,\qquad \www_A\cdot\nabla\omega_1\equiv 0\mbox{\ \ \ in \,}B_1
	\end{equation} 
with $\www_A=\nabla^\bot\psi_A$, 
there exists at least one solution $\omega\in H^1(B_1)$ satisfying $(\ref{vor-ns-form})$ and the following energy estimate: 
\begin{equation}\label{vor-apr-est}
		\|\nabla\omega\|_{L^2(B_1)}\le C\left(\|\omega_1\|_{L^4(B_1)}\|\omega_1\|_{L^2(B_1)}+\|\nabla\omega_1\|_{L^2(B_1)}\right).
	\end{equation} }
\end{thm}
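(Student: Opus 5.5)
Write $\omega=\omega_1+\theta$ with $\theta\in H^1_0(B_1)$ and look for $\theta$ as a fixed point via the Leray--Schauder principle. The key observation is that the large parameter $\alpha$ must drop out of every estimate. Since $\www_A=\nabla^\bot\psi_A$ is divergence free, and $\www_A\cdot\nabla\omega_1\equiv0$ by (\ref{apr-omeg1}), the equation for $\theta$ reads
$$-\Delta\theta+(\alpha\www_A+\we_B)\cdot\nabla\theta=\Delta\omega_1-\we_B\cdot\nabla\omega_1 .$$
Here $\we_B=\we_B[\omega_1+\theta]$ is the Cauchy transform from (\ref{vor-ns-form}). It is divergence free, because $\partial_{\bar z}\we_B=i\omega/2$ is purely imaginary in the complex convention.

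For a given $\tilde\theta\in L^2$ (or $L^4$), I would define the map $T:\tilde\theta\mapsto\theta$ by freezing $\we_B=\we_B[\omega_1+\tilde\theta]$ and solving the resulting linear problem. Its bilinear form is coercive on $H^1_0$, because the transport term is antisymmetric: $\int (\mathbf b\cdot\nabla\theta)\theta=0$ for divergence-free $\mathbf b$ with $\theta=0$ on $\partial B_1$. Lax--Milgram therefore gives existence. Compactness of $T$ follows from the compact embedding $H^1_0\subset L^2$ together with standard bounds on $\we_B$. By Calder\'on--Zygmund/Hardy--Littlewood--Sobolev, $\|\we_B\|_{L^q}\le C\|\omega\|_{L^2}$ for all $q<\infty$, and $\|\nabla \we_B\|_{L^2}\le C\|\omega\|_{L^2}$.

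The a priori estimate comes from testing the $\theta$-equation, for a fixed point of $\sigma T$, with $\theta$. The $\alpha$-term and the $\we_B\cdot\nabla\theta$ term both vanish, which leaves
$$\|\nabla\theta\|_2^2=-\int\nabla\omega_1\cdot\nabla\theta-\int(\we_B\cdot\nabla\omega_1)\theta .$$
I would integrate the last term by parts as $\int\omega_1\,\we_B\cdot\nabla\theta$ (using $\operatorname{div}\we_B=0$ and $\theta|_{\partial B_1}=0$) and bound it by $\|\omega_1\|_{4}\|\we_B\|_{4}\|\nabla\theta\|_2$. Then $\|\we_B\|_4\le C\|\omega\|_2\le C(\|\omega_1\|_2+\|\theta\|_2)\le C\|\omega_1\|_2+C\|\nabla\theta\|_2$ by Poincar\'e. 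This gives
$$\|\nabla\theta\|_2\le \|\nabla\omega_1\|_2+C\|\omega_1\|_4\|\omega_1\|_2+C\|\omega_1\|_4\|\nabla\theta\|_2 .$$

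The main obstacle is absorbing the last term, which requires $C\|\omega_1\|_4$ to be small, whereas the hypothesis only makes $\|\omega_1\|_2\le C_*$ small. My first attempt would be to avoid $L^4$ of $\omega_1$ in the absorbed term: write $\int\omega_1\we_B[\theta]\cdot\nabla\theta$ and use the antisymmetry/vector structure, for example rewriting it as $-\int \theta\,\we_B[\theta]\cdot\nabla\omega_1$. One could then estimate $\|\theta\|_4\|\we_B[\theta]\|_4\|\nabla\omega_1\|_2$, which is only quadratic and so not better. Failing that, I would estimate $\|\omega_1\we_B[\theta]\|_2\le\|\omega_1\|_2\|\we_B[\theta]\|_\infty$, which fails marginally in 2D. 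The promising route is to control $\we_B[\theta]$ in a BMO/Orlicz or $L^\infty$-type norm via a slightly higher integrability of $\theta$, or to use the Lorentz refinement $\|\we_B\|_{L^{\infty}}\lesssim \|\theta\|_{L^{2,1}}$. In the worst case I would accept a threshold of the form $\|\omega_1\|_2\le C_*$ and bound $\|\omega_1\we_B[\theta]\|_2\le \|\omega_1\|_2\,\|\we_B[\theta]\|_{\infty}$, using $\|\we_B[\theta]\|_\infty\le C\|\theta\|_{L^{p}}$ for $p>2$ and an interpolation to close with the smallness $C_*$. Once the uniform bound $\|\nabla\theta\|_2\le C(\|\omega_1\|_4\|\omega_1\|_2+\|\nabla\omega_1\|_2)$ holds for all fixed points of $\sigma T$, $\sigma\in[0,1]$, Leray--Schauder yields $\theta$. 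Then $\omega=\omega_1+\theta$ satisfies (\ref{vor-apr-est}), since $\|\nabla\omega\|_2\le\|\nabla\omega_1\|_2+\|\nabla\theta\|_2$.
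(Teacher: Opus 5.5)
Your proposal is essentially the paper's proof: the same splitting $\omega=\omega_1+\theta$ with $\theta\in H^1_0(B_1)$, Leray--Schauder, and testing with $\theta$ so that both transport terms vanish because $\operatorname{div}(\alpha\www_A+\we_B)=0$. Defining $T$ through the frozen linear problem and Lax--Milgram, instead of through Riesz representation of the whole form as the paper does, is an inessential variant.

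The step you flag as the main obstacle is not one. Your ``worst case'' route is exactly the paper's argument, and it closes at once. By H\"older against $|z-\zeta|^{-1}\in L^{4/3}$ and the Sobolev embedding for $H^1_0(B_1)$, you get $\|\we_B[\theta]\|_{L^\infty(B_1)}\le C\|\theta\|_{L^4}\le C\|\nabla\theta\|_{L^2}$. Hence $\bigl|\int_{B_1}\omega_1\,\we_B[\theta]\cdot\nabla\theta\bigr|\le C\|\omega_1\|_{L^2}\|\nabla\theta\|^2_{L^2}$, which is absorbed precisely by the hypothesis $\|\omega_1\|_{L^2}\le C_*$. The $\we_B[\omega_1]$ part gives $C\|\omega_1\|_{L^4}\|\omega_1\|_{L^2}\|\nabla\theta\|_{L^2}$, which is the first term on the right of the estimate.

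Your remark that the $L^2\times L^\infty$ split ``fails marginally in 2D'' would only apply if $\theta$ were merely in $L^2$, not in $H^1_0$. No interpolation, Lorentz or BMO refinement is needed. For the $\theta$-part, drop the $\|\omega_1\|_{L^4}\|\we_B\|_{L^4}$ bound entirely, since it would require $\|\omega_1\|_{L^4}$ to be small.
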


Note, that both constants $C$ and $C_*$ are independent of $\alpha$, $\psi_A$, $\omega_1$, etc. Moreover, the remarkable and surprising feature here is that the entire right hand side in the apriory estimate~(\ref{vor-apr-est}) is independent of~$\alpha$, in particular, we can take $\alpha\to+\infty$! The proof of Theorem~\ref{ex-omega1} is based on the Leray-Schauder fixed point theorem, following the classical approach on the existence of solutions to the stationary Navier-Stokes system in bounded domains (see, e.g., \cite[Chapter~3]{leraybook}).
\newenvironment{proofof5}{\begin{proof}[\indent\underline{\sc Proof of Theorem \ref{ex-omega1}}.\ \ \ ]}{\end{proof}}

Theorem~\ref{ex-omega1} allows us to construct a lot of solutions to the  Navier--Stokes system~(\ref{NS}) in the vorticity formulation~(\ref{vor-ns-form}). Having such a tool, it is easy to finish the proof of Theorem~\ref{f-conj}.
Take $\psi_A=xy$ and firstly find a suitable smooth~$\omega_1$ constant on level sets of~$\psi_A$, supported in the subdomain $B_1\cap\{xy> t_0\}$ \,(with $t_0>0$ small enough), and satisfying the~required inequality
\begin{equation}\label{c-ex-omeg1}\left|\int_{B_1}\frac{\omega_1 z^{\bot}}{|z|^2}\right|>>\|\omega_1\|_{L^2(B_1)}.
\end{equation}
Multiplying the~constructed $\omega_1$ by a~sufficiently small positive constant, we can easily get~(\ref{apr-omeg1}) without changing~(\ref{c-ex-omeg1}), so now Theorem~\ref{ex-omega1}
guarantees the~existence of solutions to (\ref{vor-ns-form}) with arbitrary~$\alpha>0$. Taking $\alpha=\alpha_k\to+\infty$, we get a sequence of solutions~$\omega=\omega_k$ to~(\ref{vor-ns-form}) with the uniform estimate~
\begin{equation}\label{c-ex-omeg1--}
\|\nabla\omega_k\|_{L^2(B_1)}\le C
\end{equation}
(see~(\ref{vor-apr-est})\,), where, as we emphasized before, the constant~$C$ is independent of~$\alpha_k\to+\infty$! Then it is easy to check by some elementary Real Analysis arguments, that $\|\omega_k-\omega_1\|_{L^2(B_1)}\to 0$, which by~(\ref{c-ex-omeg1}) justifies the~required estimate ~(\ref{for-conj-f})
for~$k$ large enough. This finishes the proof of Theorem \ref{f-conj}, which ensures us, that the basic estimate cannot be improved to be independent of the whole~$\nabla\mathbf{w}$. 

The paper is organized as follows. In Section~\ref{prel}, we give some preliminary results. In Section~\ref{sec:3-basic-est},  we prove the improved basic estimate in balls and annulus respectively. In Section~\ref{sec:4-exist}, we prove the existence Theorem~\ref{ex-omega1} 
for the vorticity equations (\ref{vor-ns-form}). In Section~\ref{sec:5-fin}, we prove Theorem \ref{f-conj}, which shows that Theorem \ref{control by omega in balls} is sharp.

\section{Preliminaries}\label{prel}

We use the standard notations for Sobolev space $H^1(\Omega)=W^{1,2}(\Omega)$ and $H^1_0(\Omega)=\overline{C^\infty_0(\Omega)}^{H^1}$. For $f,g\in H^1_0(\Omega)$, we denote the inner product by $(f,g)=\int_\Omega \nabla f\cdot \nabla g$.
Below for $0< r_1<r_2$, we denote the corresponding annulus domain by symbol  $$\Omega_{r_1,r_2}=\{z\in\Omega:r_1<|z|<r_2\}.$$
Further, for a function $f\in H^1(\Omega_{r_1,r_2})$, we denote by $D_f(r_1,r_2)$ the Dirichlet integral of $f$:
$$D_f(r_1,r_2)=\int_{\Omega_{r_1,r_2}}|\nabla f|^2.$$ We also denote by $\overline{f}(r)$ the mean value of $f$ over the circle $S_r$ $$\overline{f}(r)=\frac{1}{2\pi r}\int_{S_r}f\,ds.$$
Using this notation, recall some classical results for D-functions. 

\begin{lemma}\label{lem:d-est-gen}{\sl 
For any $f\in H^1(\Omega_{r_1,r_2})$, there holds 
$$|\overline{f}(r_2)-\overline{f}(r_1)|\le\frac{1}{\sqrt{2\pi}}\left(\ln\frac{r_2}{r_1}\right)^{\frac{1}{2}}\sqrt{D_f(r_1,r_2)}.$$}
\end{lemma}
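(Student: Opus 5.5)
The plan is to write the mean value difference as an integral of the radial derivative and then apply Cauchy--Schwarz in polar coordinates. By density we may assume $f$ is smooth on $\overline{\Omega}_{r_1,r_2}$; the general case follows because both sides are continuous in $H^1(\Omega_{r_1,r_2})$. Traces on circles are continuous from $H^1$ into $L^2(S_r)$, so the means $\overline f(r_i)$ are continuous functionals, and the right-hand side is continuous as well.

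For smooth $f$, set $\overline f(r)=\frac1{2\pi}\int_0^{2\pi} f(r,\theta)\,d\theta$, so that $\frac{d}{dr}\overline f(r)=\frac1{2\pi}\int_0^{2\pi}\partial_r f(r,\theta)\,d\theta$. Integrating in $r$ gives
$$\overline f(r_2)-\overline f(r_1)=\frac1{2\pi}\int_{r_1}^{r_2}\int_0^{2\pi}\partial_r f\,d\theta\,dr=\frac1{2\pi}\int_{\Omega_{r_1,r_2}}\frac{\partial_r f}{r}\,dx\,dy,$$
where we used the area element $dx\,dy=r\,dr\,d\theta$.

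Now apply Cauchy--Schwarz with respect to the measure $r\,dr\,d\theta$, using $|\partial_r f|\le|\nabla f|$:
$$|\overline f(r_2)-\overline f(r_1)|\le\frac1{2\pi}\left(\int_{\Omega_{r_1,r_2}}\frac{1}{r^2}\right)^{1/2}\left(\int_{\Omega_{r_1,r_2}}|\nabla f|^2\right)^{1/2}.$$
The first factor is explicit:
$$\int_{\Omega_{r_1,r_2}} r^{-2}=\int_0^{2\pi}\int_{r_1}^{r_2} r^{-1}\,dr\,d\theta=2\pi\ln\frac{r_2}{r_1}.$$
Substituting, the constant becomes $\frac{1}{2\pi}\sqrt{2\pi\ln(r_2/r_1)}=\frac{1}{\sqrt{2\pi}}\bigl(\ln\frac{r_2}{r_1}\bigr)^{1/2}$, which is exactly the claimed estimate.

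None of these steps is a real obstacle. The only point needing care is the density and trace justification, namely that the circle means are well defined and continuous for $H^1$ functions. This can be settled either by noting that $f(r,\cdot)$ is absolutely continuous in $r$ for almost every $\theta$ (the ACL property), or by the smooth approximation argument above.
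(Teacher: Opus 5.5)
Your proof is correct. The paper quotes this lemma as classical and gives no proof; your route is the standard one: write $\overline f(r_2)-\overline f(r_1)=\frac1{2\pi}\int_{\Omega_{r_1,r_2}}\frac{\nabla f\cdot z}{|z|^2}$, then apply Cauchy--Schwarz against the weight $|z|^{-2}$, whose integral over the annulus is $2\pi\ln\frac{r_2}{r_1}$, with the density and trace argument handling general $H^1$ functions. This is the same H\"older computation the paper performs with $\omega$ in reduction (ii) of the proof of Theorem~\ref{control by omega in balls}.
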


For a solution $\we=(u,v)$ to (\ref{NS}), we denote the corresponding vorticity by the symbol $\omega=\partial_yu-\partial_x v$ . We consider a~suitable complexification for the vector $\mathbf{w}\in\R^2$ of the form $\mathbf{w}=u-iv$. Then by the classical 
Cauchy-Pompeiu formula, there holds
\begin{equation}\label{complexification of w}
\mathbf{w}(z)=\frac{1}{2\pi i}\left(\int_{\partial\Omega_{r_1,r_2}}\frac{\mathbf{w}(\zeta)}{\zeta-z}\,d\zeta+\int_{\Omega_{r_1,r_2}}\frac{\omega(\zeta)}{\zeta-z}\,d\xi d\eta\right),\quad z\in\Omega_{r_1,r_2}
\end{equation}
(see, e.g., \cite{GW78}).
We can rewrite the representation (\ref{complexification of w}) in the form $\we=\we_A+\we_B$, where 
\begin{equation}\label{wa and wb}
\mathbf{w}_A(z)=c_A+\frac{1}{2\pi i}\int_{\partial\Omega_{r_1,r_2}}\frac{\mathbf{w}(\zeta)}{\zeta-z}\,d\zeta,\qquad \mathbf{w}_B(z)=-c_A+\frac{1}{2\pi i}\int_{\Omega_{r_1,r_2}}\frac{\omega(\zeta)}{\zeta-z}\,d\xi d\eta,
\end{equation}
and the constant $c_A$ is chosen in such a way that $\we_A$ mean values are identically zero:
\begin{equation}\label{average of wa is 0}
\overline{\mathbf{w}}_A(r)=\0,\quad \forall r\in(r_1,r_2)
\end{equation}
(this is possible, since by construction $\mathbf{w}_A$ is a~complex-analytic function, and, consequently, satisfies  the~corresponding mean-value property). Recall the following classical isometry identity for $\mathbf{w}_B$:

\begin{lemma}[see, e.g., \cite{Astala-book}]\label{wb estimate} {\sl For $\mathbf{w}_B$ defined in (\ref{wa and wb}), there holds
$$\|\nabla \mathbf{w}_B\|_{L^2(\mathbb{R}^2)}=\|\omega\|_{L^2(\Omega_{r_1,r_2})}.$$}
\end{lemma}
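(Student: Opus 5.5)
The identity $\|\nabla \mathbf{w}_B\|_{L^2(\mathbb{R}^2)}=\|\omega\|_{L^2(\Omega_{r_1,r_2})}$ is the $L^2$-isometry of the Beurling transform, and I would prove it on the Fourier side. Constants do not affect the gradient, so the shift $-c_A$ can be ignored. Extend $\omega$ by zero outside $\Omega_{r_1,r_2}$ and call the extension $\omega$ again. Then $\mathbf{w}_B+c_A$ is the Cauchy transform
$$\mathcal{C}\omega(z)=\frac{1}{2\pi i}\int_{\mathbb{R}^2}\frac{\omega(\zeta)}{\zeta-z}\,d\xi d\eta ,$$
which is defined on all of $\mathbb{R}^2$. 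Since $\frac{1}{\pi z}$ is the fundamental solution of $\partial_{\bar z}$, we get, in the sense of distributions on $\mathbb{R}^2$,
$$\partial_{\bar z}\mathcal{C}\omega=\frac{i\omega}{2},$$
which matches (\ref{bar-z-der}). The $\partial_z$-derivative is given by the principal value singular integral $\partial_z\mathcal{C}\omega=\frac{i}{2}S\omega$, where $S$ is the Beurling transform with kernel $-\frac{1}{\pi z^2}$.

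Next I would pass to the Fourier side. The multiplier of $S$ is $\bar\xi/\xi$ in complex notation, which has modulus one. Hence $\|S\omega\|_{L^2}=\|\omega\|_{L^2}$, and therefore $\|\partial_z\mathcal{C}\omega\|_{L^2}=\|\partial_{\bar z}\mathcal{C}\omega\|_{L^2}=\frac12\|\omega\|_{L^2}$. It remains to convert these to the real gradient. For a complex function $f$ one has the identity $|\nabla f|^2=|\partial_x f|^2+|\partial_y f|^2=2(|\partial_z f|^2+|\partial_{\bar z} f|^2)$, where $|\nabla f|^2$ is the sum of the squared moduli of all real partial derivatives of $\operatorname{Re} f$ and $\operatorname{Im} f$. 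Integrating, $\|\nabla \mathbf{w}_B\|_{L^2}^2=2\bigl(\tfrac14+\tfrac14\bigr)\|\omega\|_{L^2}^2=\|\omega\|_{L^2}^2$, which is the claim. The identification $\mathbf{w}=u-iv$ only conjugates one component, so it does not change $|\nabla\cdot|$.

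The main obstacle is justifying this computation for a general $\omega\in L^2$, because $\mathcal{C}\omega$ is not in $L^2(\mathbb{R}^2)$ in general: it decays only like $|z|^{-1}$, with leading term $\frac{1}{2\pi i}\bigl(\int\omega\bigr)/z$. Only its gradient is square integrable, so the Fourier computation should be done at the level of derivatives. I would first take $\omega\in C_c^\infty$. In that case $\nabla\mathcal{C}\omega=\mathcal{C}(\nabla\omega)$ is smooth and decays like $|z|^{-2}$, so it lies in $L^2$, and the multiplier identity $\widehat{\partial_z\mathcal{C}\omega}=\frac{\bar\xi}{\xi}\widehat{\partial_{\bar z}\mathcal{C}\omega}$ holds pointwise for $\xi\neq0$. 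Alternatively, one can avoid Fourier analysis by integrating by parts twice: $\int|\partial_z f|^2=\int|\partial_{\bar z}f|^2$ for such $f$, and the boundary terms at infinity vanish thanks to the $|z|^{-1}$ and $|z|^{-2}$ decay. The general case then follows by density: the identity shows that $\omega\mapsto\nabla\mathcal{C}\omega$ extends to an isometry from $L^2$ into $L^2$, and this extension agrees with the distributional gradient of $\mathcal{C}\omega$ because $\mathcal{C}$ is continuous from $L^2$ with compact support into $L^1_{loc}$.
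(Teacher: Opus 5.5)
Your proof is correct, and it is precisely the classical $L^2$-isometry of the Beurling transform that the paper simply cites from Astala's book without giving an argument of its own. The identity $|\nabla f|^2=2(|\partial_z f|^2+|\partial_{\bar z} f|^2)$, the unimodular multiplier $\bar\xi/\xi$, and your density step fill in what the citation leaves implicit; the density step works here because $\Omega_{r_1,r_2}$ is bounded, so $\omega$ has compact support.
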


Under above notation, we have, in particular, that $\mathbf{w}_A$ is harmonic as a real vector-field, and satisfies $$\nabla\cdot \mathbf{w}_A=0,\quad \nabla\times \mathbf{w}_A=0.$$ 
Wen need the  following elementary estimate on the maximum of $\mathbf{w}_A$:

\begin{lemma}\label{max w_A}
	{\sl Let $\mathbf{w}$ be the solution to the Navier-Stokes equations in the annulus $\Omega_{r_1,r_2}=\{r_1\le |z|\le r_2\}$, and $\mathbf{w}_A, \mathbf{w}_B$ defined as above in~(\ref{wa and wb}) satisfying (\ref{average of wa is 0}). Suppose in addition that $0<4 r_1< r_2$. Then there holds 
\begin{equation}\label{wae}
\sup_{\Omega_{2r_1,\frac{r_2}{2}}} |\mathbf{w}_A|\le 4\sqrt{D_A}, 
\end{equation}	
where $$D_A=\int_{\Omega_{r_1,r_2}}|\nabla \mathbf{w}_A|^2.$$
}
\end{lemma}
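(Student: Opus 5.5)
The idea is to write $\mathbf{w}_A$ as a Laurent series and estimate its Laurent coefficients through the Dirichlet integral, one annulus at a time. Since $\mathbf{w}_A$ is complex-analytic in $\Omega_{r_1,r_2}$ and has zero circle means by (\ref{average of wa is 0}), it has an expansion
$$\mathbf{w}_A(z)=\sum_{n\neq 0}a_nz^n ,\qquad r_1<|z|<r_2 .$$
The constant term is missing because $a_0=\overline{\mathbf{w}}_A(r)=0$. For a holomorphic function we have $|\nabla \mathbf{w}_A|^2=2|\mathbf{w}_A'|^2$ (the Jacobian matrix has Frobenius norm squared $2|f'|^2$). By orthogonality of $z^{n-1}$ on circles,
$$D_A=2\int_{r_1}^{r_2}2\pi\sum_{n\ne0}n^2|a_n|^2\rho^{2n-1}\,d\rho .$$

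Next we bound each coefficient. For $n\ge1$ the integral $\int_{r_1}^{r_2}\rho^{2n-1}d\rho$ is at least $(r_2^{2n}-r_1^{2n})/(2n)$, and since $r_2>4r_1$ this is at least $\tfrac{15}{16}\,r_2^{2n}/(2n)$. Hence
$$|a_n|^2r_2^{2n}\le \frac{16}{15}\,\frac{D_A}{2\pi n} .$$
For $n\le -1$, write $k=-n$; the same computation gives $|a_{-k}|^2r_1^{-2k}\le \frac{16}{15}\frac{D_A}{2\pi k}$, using $r_1^{-2k}-r_2^{-2k}\ge \tfrac{15}{16}r_1^{-2k}$.

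Now we evaluate at a point of the smaller annulus. For $z\in\Omega_{2r_1,r_2/2}$ we have $|z/r_2|\le 1/2$ and $|r_1/z|\le1/2$, so by the triangle inequality
$$|\mathbf{w}_A(z)|\le \sum_{n\ge1}|a_n|r_2^n2^{-n}+\sum_{k\ge1}|a_{-k}|r_1^{-k}2^{-k}\le 2\sqrt{\tfrac{16 D_A}{30\pi}}\sum_{n\ge1}\frac{2^{-n}}{\sqrt n}.$$
The last sum is less than $1$, and $2\sqrt{16/(30\pi)}<1$, so in fact $|\mathbf{w}_A(z)|\le\sqrt{D_A}$. This comfortably gives the factor $4$ in (\ref{wae}).

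The main obstacle is the identification of the Dirichlet energy with the Laurent coefficients. This requires $D_A<\infty$ and the analyticity of $\mathbf{w}_A$ in the open annulus. Both hold for a $D$-solution: $\mathbf{w}_B$ has finite Dirichlet integral by Lemma~\ref{wb estimate}, so $\mathbf{w}_A=\mathbf{w}-\mathbf{w}_B$ does too. Once this is in place, the rest is routine bookkeeping.
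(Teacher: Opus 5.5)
Your proof is correct, and it takes a different route from the paper.

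The paper never expands $\mathbf{w}_A$. It uses the mean value theorem to pick radii $\tilde r_1\in(r_1,2r_1)$ and $\tilde r_2\in(r_2/2,r_2)$ on which $\int_{S_{\tilde r_i}}|\nabla\mathbf{w}_A|^2$ is controlled by $D_A$ divided by the collar width. On those two circles, Cauchy--Schwarz together with the vanishing circle mean gives $|\mathbf{w}_A|\le\int_{S_{\tilde r_i}}|\nabla\mathbf{w}_A|\le 4\sqrt{D_A}$. The maximum principle on $\Omega_{\tilde r_1,\tilde r_2}\supset\overline{\Omega_{2r_1,r_2/2}}$ then finishes the proof.

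That argument is soft. It uses only harmonicity, zero mean on two circles, and the energy in the collars $\Omega_{r_1,2r_1}$ and $\Omega_{r_2/2,r_2}$, so it would survive in settings without Fourier orthogonality on circles, and it costs a worse constant.

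Your Laurent--Parseval computation instead uses the rotational structure exactly: it turns $D_A$ into a weighted $\ell^2$ norm of the coefficients. It yields an explicit and better constant, about $0.66\sqrt{D_A}$. Your bookkeeping checks out: $(r_1/r_2)^{2n}\le 1/16$, $\sum_{n\ge1}2^{-n}n^{-1/2}\approx 0.81$, and $2\sqrt{16/(30\pi)}\approx 0.82$.

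One small remark: your concern about $D_A<\infty$ is unnecessary. The estimate is trivial when $D_A=\infty$, and your Parseval identity holds in $[0,\infty]$ by monotone convergence.
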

\begin{proof}
	Since $\int_{\Omega_{r_1,2r_1}}|\nabla \mathbf{w}_A|^2\le  D_A$, we have by mean value theorem 
	$$r_1\int_{S_{\tilde{r}_1}}|\nabla \mathbf{w}_A|^2\le D_A.$$ for some $\tilde{r}_1\in (r_1,2r_1)$. Then by Cauchy-Schwarz inequality, $$\int_{S_{\tilde{r}_1}}|\nabla \mathbf{w}_A|\le \sqrt{\frac{2\pi \tilde{r}_1}{r_1}}\,\sqrt{D_A}\le 4\sqrt{D_A}.$$
	Since $\mathbf{w}_A$ has zero mean value over each circle~$S_r$ (see~(\ref{average of wa is 0})\,), we get $$|\mathbf{w}_A|\le \int_{S_{\tilde{r}_1}}|\nabla \mathbf{w}_A|\le 4\sqrt{D_A}\quad\text{on \ }S_{\tilde{r}_1}.$$
	Similarly, $$|\mathbf{w}_A|\le 4\sqrt{D_A}\quad\text{on }S_{\tilde{r}_2}.$$
	for some $\tilde{r}_2\in(r_2/2,r_2)$. Then by maximum principle for harmonic function, $$|\mathbf{w}_A|\le 4\sqrt{D_A}\quad\text{in}\quad\overline{\Omega_{2r_1,\frac{r_2}{2}}}\subset\Omega_{\tilde{r}_1,\tilde{r}_2}.$$
\end{proof}

\medskip
This simple estimate and the Cauchy integral formula allow us to have pointwise control on the gradient~$\nabla\we_A(z)$, see~(\ref{cases-ann}), and it will be crucial for the subsequent proofs of Theorems~\ref{control by omega in balls}--\ref{control by omega in annulus}.
We need also the following simple observation from real analysis concerning the monotonicity of the right-hand side of estimates~(\ref{eq:est-balls})--(\ref{eq:est-ann}): 

\begin{lemma}\label{subd-mon}
	{\sl  For any quadruple of numbers $s,t,s',t'$ satisfying $0<s'<s$ and $0<t'<t $ we have 
\begin{equation*}\label{eq:mon1}
t'\cdot\ln\biggl(2+\frac{s'}{t'}\biggr)<t\cdot\ln\biggl(2+\frac{s}{t}\biggr)
\end{equation*}}
\end{lemma}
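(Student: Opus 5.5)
The claim is that $F(s,t)=t\ln\bigl(2+\frac{s}{t}\bigr)$ is strictly increasing in each variable separately on $(0,\infty)^2$. Given that, we go from $(s',t')$ to $(s,t)$ in two moves:
$$F(s',t')<F(s,t')<F(s,t).$$
Both inequalities are strict because $s'<s$ and $t'<t$.

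\emph{Monotonicity in $s$.} Fix $t>0$. Then $s\mapsto \ln(2+s/t)$ is strictly increasing, since $\ln$ is strictly increasing and $s/t$ increases with $s$. Multiplying by the positive factor $t$ preserves strict monotonicity. This gives $F(s',t')<F(s,t')$.

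\emph{Monotonicity in $t$.} Fix $s>0$ and set $x=s/t>0$. Then
$$\partial_t F(s,t)=\ln(2+x)-\frac{x}{2+x}.$$
Since $2+x>2$, we have $\ln(2+x)>\ln 2$. Also $\frac{x}{2+x}<1$. So it is enough to compare the two sides with a little more care:
\begin{itemize}
\item If $x\le 2$, then $\frac{x}{2+x}\le \frac12<\ln 2<\ln(2+x)$.
\item If $x>2$, then $\ln(2+x)>\ln 4>1>\frac{x}{2+x}$.
\end{itemize}
A cleaner single argument: the map $g(x)=\ln(2+x)-\frac{x}{2+x}$ has $g(0)=\ln 2>0$ and
$$g'(x)=\frac{1}{2+x}-\frac{2}{(2+x)^2}=\frac{x}{(2+x)^2}\ge 0.$$
Either way, $\partial_t F>0$ for all $t>0$. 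Hence $F(s,t')<F(s,t)$.

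The proof involves no real obstacle. The only step needing care is the sign of $\partial_t F$, because increasing $t$ decreases the argument $s/t$ of the logarithm. That sign is settled by the elementary inequality $\ln(2+x)>\frac{x}{2+x}$ for $x>0$, shown above.
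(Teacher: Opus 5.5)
Your proof is correct and takes essentially the paper's approach: monotonicity in $s$ is immediate, and monotonicity in $t$ comes from the positivity of $\partial_t F=\ln(2+x)-\frac{x}{2+x}$. The paper first normalizes to $s=s'=1$ by homogeneity, which is equivalent to your substitution $x=s/t$; you additionally carry out the sign check on $\partial_t F$ that the paper leaves to the reader.
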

\begin{proof}
	Monotonicity with respect to~$s$ is evident. So we can reduce the consideration to the model case $s=s'=1$. Then the desired inequality can be checked just taking a~derivative with respect to~$t$.
\end{proof}

For $\mathbf{w}$, we do not distinguish between real vector field and complex function. We use $C$ to denote constants that are universal. The exact values of $C$ may change from line to line.

\section{Improvement of the basic estimate to the control by~$\omega$}
\label{sec:3-basic-est}

\begin{proofof3}
	We firstly make some simplifications.
	\begin{itemize}
		\item[(\romannumeral1)] $D>100\,D_\omega$. Otherwise, if $D\le 100\,D_\omega$, then by Theorem~\ref{first basic estimate in balls theorem} we have  that 
		$$|\overline{\mathbf{w}}(r_2)-\overline{\mathbf{w}}(r_1)|\le C\sqrt{D}\le  10 C\sqrt{D_\omega}\le C'\sqrt{\ln\left(2+\frac{D}{D_\omega}\right)D_\omega}.$$
		
		\item[(\romannumeral2)] $r_1<\frac{D_\omega}{D}r_2$. Otherwise, if $\frac{r_2}{r_1}\le\frac{D}{D_\omega}$, then by (\ref{representation of the difference of the average by omega}) and H\"older inequality, we have that 
		$$\begin{aligned}
			|\overline{\mathbf{w}}(r_2)-\overline{\mathbf{w}}(r_1)|=\frac{1}{2\pi}\left|\int_{\Omega_{r_1,r_2}}\frac{\omega z^\bot}{|z|^2}\right|&\le \frac{1}{\sqrt{2\pi}}\left(\int_{\Omega_{r_1,r_2}}|\omega|^2\right)^{\frac{1}{2}}\sqrt{\ln\frac{r_2}{r_1}}\\
			&\le \sqrt{\frac1{2\pi}\ln\frac{D}{D_\omega}}\left(\int_{\Omega_{r_1,r_2}}|\omega|^2\right)^{\frac{1}{2}}.
		\end{aligned}$$
		\item[(\romannumeral3)] It suffices to prove that \begin{equation}\label{reduction 3}
	\left|\overline{\mathbf{w}}\left(\frac{r_2}{4}\right)-\overline{\mathbf{w}}(4r_1)\right|\le C\sqrt{\ln\left(2+\frac{D}{D_\omega}\right)D_\omega}.
		\end{equation} 
		
Indeed, if (\ref{reduction 3}) is established, then by (\ref{representation of the difference of the average by omega}) and H\"older inequality, we have that 
		$$	|\overline{\mathbf{w}}(r_1)-\overline{\mathbf{w}}(4r_1)|=\frac{1}{2\pi}\left|\int_{\Omega_{r_1,4r_1}}\frac{\omega z^\bot}{|z|^2}\right|\le \frac12\left(\int_{\Omega_{r_1,4r_1}}|\omega|^2\right)^{\frac{1}{2}}\le \frac12\sqrt{D_\omega}.$$
		Similarly, we have that $|\overline{\mathbf{w}}(r_2)-\overline{\mathbf{w}}(r_2/4)|\le \frac12\sqrt{D_\omega}.$ Hence 
		$$\begin{aligned}
\left|\overline{\mathbf{w}}(r_1)-\overline{\mathbf{w}}(r_2)\right|&\le \left|\overline{\mathbf{w}}(r_1)-\overline{\mathbf{w}}(4r_1)\right|+\left|\overline{\mathbf{w}}(4r_1)-\overline{\mathbf{w}}\left(\frac{r_2}{4}\right)\right|+\left|\overline{\mathbf{w}}\left(\frac{r_2}{4}\right)-\overline{\mathbf{w}}(r_2)\right|\\
&\le \sqrt{D_\omega}+C\sqrt{\ln\left(2+\frac{D}{D_\omega}\right)D_\omega}\le C' \sqrt{\ln\left(2+\frac{D}{D_\omega}\right)D_\omega}.
\end{aligned}$$
\end{itemize}
	
	From (\romannumeral1) and (\romannumeral2), we know that $r_2>100r_1$. In particular, for the geometric mean $r_0:=\sqrt{r_1r_2}$ we have
$$\frac{r_0}{r_1}, \,\frac{r_2}{r_0}>10.$$
Also from (\romannumeral1), \ 
\begin{equation}\label{add-ineq}
\frac{4}{5}D_A\le D\le \frac{5}{4}D_A\qquad\text{ and }\qquad81\,D_\omega\le D_A.\end{equation}
Now, after the above simplifications, we are ready to start the proof of the theorem. First of all, we have to get pointwise estimates for the gradient of the analytic part $\nabla\we_A$. 
Noting that for $z\in\Omega(4r_1,\frac{r_2}{4})$,
	$$\begin{aligned}
		|\nabla\mathbf{w}_A(z)|&=\frac{1}{2\pi}\left|\int_{\partial B_{\frac{r_2}{2}}}\frac{\mathbf{w}_A(\zeta)}{(z-\zeta)^2}\,d\zeta-\int_{\partial B_{2r_1}}\frac{\mathbf{w}_A(\zeta)}{(z-\zeta)^2}\,d\zeta\right|\\
		&\overset{(\ref{wae})}\le \frac2\pi\sqrt{D_A}\left(\int_{\partial B_{\frac{r_2}{2}}}\frac{1}{|z-\zeta|^2}+\int_{\partial B_{2r_1}}\frac{1}{|z-\zeta|^2}\right)\\
		&\le C\sqrt{D_A}\left(\frac{1}{r_2}+\frac{r_1}{|z|^2}\right).
	\end{aligned}$$
Then we have that 
\begin{equation}\label{cases-ann}
|\nabla\mathbf{w}_A(z)|\le\begin{cases}
		\frac{C\sqrt{D_A}r_1}{|z|^2},\quad &z\in\Omega_{4r_1,r_0};\\[8pt]
		\frac{C\sqrt{D_A}}{r_2},& z\in\Omega_{r_0,\frac{r_2}{4}}.
	\end{cases}\end{equation}
Thus, we are going to estimate separately 
$\left|\overline{\mathbf{w}}(\frac{r_2}{4})-\overline{\mathbf{w}}(r_0)\right|$ and $\left|\overline{\mathbf{w}}(r_0)-\overline{\mathbf{w}}(4r_1)\right|$.
	
	 Take some $r_\ast\in(4r_1, r_0)$ to be determined later. Then by (\ref{i-basic-estimate})--(\ref{representation of the difference of the average by omega}) and H\"older inequality,
	$$\begin{aligned}
		\left|\overline{\mathbf{w}}(r_0)-\overline{\mathbf{w}}(4r_1)\right|&\le 
		\left|\overline{\mathbf{w}}(r_\ast)-\overline{\mathbf{w}}(4r_1)\right|+\left|\overline{\mathbf{w}}(r_0)-\overline{\mathbf{w}}(r_\ast)\right|\\
		&\le \sqrt{D_\omega}\sqrt{\ln\frac{r_\ast}{4r_1}}+C\sqrt{D(r_\ast, r_0)}\\
		&\le \sqrt{D_\omega}\sqrt{\ln\frac{r_\ast}{4r_1}}+C\sqrt{D_A(r_\ast, r_0)}+C\sqrt{D_\omega(r_\ast, r_0)}.
	\end{aligned}$$
Here and below we use the natural notation $D(r_\ast, r_0)=\int\limits_{\Omega_{r_\ast, r_0}}|\nabla\we|^2$, \ \ $D_A(r_\ast, r_0)=\int\limits_{\Omega_{r_\ast, r_0}}|\nabla\we_A|^2$, etc.
By the first case in (\ref{cases-ann}) we have  
$$D_A(r_\ast,r_0)\le C\,\frac{r_1^2}{r^2_\ast}D_A.$$
	Therefore, 
	$$|\overline{\mathbf{w}}(r_0)-\overline{\mathbf{w}}(4r_1)|\le C\sqrt{\ln\frac{r_\ast}{4r_1}}\sqrt{D_\omega}+C\,\frac{r_1}{r_\ast}\sqrt{D_A}\qquad\text{ whenever }\ 8r_1<r_*<r_0.$$ 
Choose now $r_\ast=\sqrt{\frac{D}{D_\omega}}r_1.$
Note, that by (\ref{add-ineq}), \ $\frac{r_1}{r_\ast}\sqrt{D_A}\le2\sqrt{D_\omega}$, and by starting assumptions~(i)--(ii) we have that $8r_1<r_*<r_0$, consequently, 
\begin{equation}\label{first-ann-concl}
|\overline{\mathbf{w}}(4r_1)-\overline{\mathbf{w}}(r_0)|\le C\sqrt{\ln\frac{D}{D_\omega}}\sqrt{D_\omega}\end{equation}
as required.

Now take some $r_{\ast\ast}\in\left(r_0,\frac{r_2}4\right)$ to be determined later. Following the similar calculations, by (\ref{i-basic-estimate})--(\ref{representation of the difference of the average by omega}) and H\"older inequality,
	$$\begin{aligned}
		\left|\overline{\mathbf{w}}\left(\frac{r_2}4\right)-\overline{\mathbf{w}}(r_0)\right|&\le \left|\overline{\mathbf{w}}\left(\frac{r_2}4\right)-\overline{\mathbf{w}}(r_{\ast\ast})\right|+\left|\overline{\mathbf{w}}(r_{\ast\ast})-\overline{\mathbf{w}}(r_0)\right|\\
		&\le C\sqrt{D_\omega}\sqrt{\ln\frac{r_2}{4r_{\ast\ast}}}+C\sqrt{D(r_{\ast\ast}, r_0)}\\
		&\le C\sqrt{D_\omega}\sqrt{\ln\frac{r_2}{4r_{\ast\ast}}}+C\sqrt{D_A(r_{\ast\ast}, r_0)}+C\sqrt{D_\omega(r_{\ast\ast}, r_0)}.
	\end{aligned}$$
Then by the second case in (\ref{cases-ann}) we have  
$$D_A(r_{\ast\ast},r_0)\le C\,\frac{r_{\ast\ast}^2}{r^2_2}D_A.$$
	Therefore, 
	$$\left|\overline{\mathbf{w}}\left(\frac{r_2}4\right)-\overline{\mathbf{w}}(r_0)\right|\le C\sqrt{\ln\frac{r_2}{4r_{\ast\ast}}}\sqrt{D_\omega}+C\,\frac{r_{\ast\ast}}{r_2}\sqrt{D_A}\qquad\text{ whenever }\ r_0<r_{**}<\frac{r_2}8.$$ 
Choose now $r_{\ast\ast}=\sqrt{\frac{D_\omega}{D}}r_2$.
Note, that by  starting assumptions~(i)--(ii) we have that $r_0<r_*<\frac{r_2}8$ \ and \ $\frac{r_{\ast\ast}}{r_2}\sqrt{D_A}\le 2\sqrt{D_\omega}$ \ (see~(\ref{add-ineq})\,), consequently, 
\begin{equation}\label{second-ann-concl}
\left|\overline{\mathbf{w}}(r_0)-\overline{\mathbf{w}}\left(\frac{r_2}4\right)\right|\le C\sqrt{\ln\frac{D}{D_\omega}}\sqrt{D_\omega}.\end{equation}

Finally, (\ref{first-ann-concl})--(\ref{second-ann-concl}) imply the~required estimate~(\ref{reduction 3}). The proof of Theorem \ref{control by omega in balls} is finished.
\end{proofof3}

\begin{proofof4} Here we have two logarithmic factors, so the situation is a bit more delicate. We need some additional simplifications in the beginning in order to stabilize the situation with the~parameter~$\mu=\frac1{m\qqs r_1}$, where, recall, 
 $m=\max\limits_{r\in[r_1,r_2]}|\overline{\mathbf{w}}(r)|$. 
We claim that it is enough to consider a special case when 
\begin{equation}\label{eq:mu-stable}
\min\limits_{r\in[r_1,r_2]}|\overline{\mathbf{w}}(r)|\ge\frac15 m.
	\end{equation}
Indeed, if the above inequality fails, we can take $r_0\in[r_1,r_2]$ with maximal value $|\overline{\mathbf{w}}(r_0)|=m$, and also take $\tilde r\in[r_1,r_2]$ satisfying
$$|\overline{\mathbf{w}}(\tilde r)|=\frac15m, \qquad |\overline{\mathbf{w}}(r)|\ge\frac15m\ \ \mbox{ for all $r$ between $\tilde r$ and $r_0$}.$$
Assume for definiteness that $\tilde r<r_0$. Then by construction the analog of the requirement  (\ref{eq:mu-stable}) is fulfilled in the sub-annulus $\Omega(\tilde r, r_0)$. The corresponding coefficient $\tilde\mu=\mu_{\tilde r, r_0}=\frac1{m\qqs \tilde r}$ will be {\it smaller}, than the initial coefficient~$\mu$. Moreover, 
$$|\overline{\mathbf{w}}(r_0)-\overline{\mathbf{w}}(\tilde r)|\ge\frac45 m\ge\frac25|\overline{\mathbf{w}}(r_1)-\overline{\mathbf{w}}(r_2)|.$$ 
Thus, if we prove the analog of~(\ref{eq:est-ann}) in the reduced annulus $\Omega(\tilde r, r_0)$, i.e., that 
\begin{equation}\label{eq:est-ann-mm}|\overline{\mathbf{w}}(r_0)-\overline{\mathbf{w}}(\tilde r)|\le \tilde C\sqrt{\ln(2+\tilde\mu)\ln\left(2+\frac{D(\tilde r,r_0)}{D_\omega(\tilde r,r_0)}\right)D_\omega(\tilde r,r_0)},
\end{equation}
it implies the validity of the required estimate~(\ref{eq:est-ann}) in the original domain $\Omega(r_1,r_2)$ with a~slightly modified constant~$C=\frac52\tilde C$ (here we use also the monotonicity property of Lemma~\ref{subd-mon}). So below we can assume w.l.o.g. that~(\ref{eq:mu-stable}) is fulfilled. 

But then for any pair $r_1<r'< r''<r_2$ we have immediately that 
$$\mu'\le 5\mu,$$
where \,$\mu'=\mu_{r',r''}=\frac1{r'm'}$ \ with \ $m'=\max\limits_{r\in[r',r'']}|\overline{\mathbf{w}}(r)|$. 

Now, having such a~stabilization of the additional coefficient~$\mu$ in an~arbitrary sub-annulus, we can proceed with the proof of Theorem~\ref{control by omega in annulus} in exactly the same way as in the previous Theorem~\ref{control by omega in balls}. The only difference is that we must now use Theorem~\ref{first basic estimate in annulus theorem} instead of Theorem~\ref{first basic estimate in balls theorem} in our arguments; everything else remains exactly the same, as the appearance of the additional factor involving $\mu$ has no effect, by virtue of its aforementioned stability. So the proof
of Theorem~\ref{control by omega in annulus}  is finished.

\end{proofof4}

\section{Existence theorem for the vorticity equation}
\label{sec:4-exist}

\begin{proofof5} Fix a harmonic function~$\psi_A\in C^2(\bar B_1)$,  a~Sobolev  function~$\om_1\in H^1(B_1)$, and a~real parameter $\alpha\in\R$  from the assumptions of Theorem~\ref{ex-omega1}. 
Denote respectively $\www_A=\nabla^\bot\psi_A$.  
In order to reduce the issue to the homogeneous case, we are looking for a~solution $\omega\in\ H^1(B_1)$ to the vorticity equations (\ref{vor-ns-form}) in the form~$\om=\ooo+\om_1$ with the~unknown function~$\ooo\in H^1_0(B_1)$ satisfying the~following nonlinear system:
\begin{equation}\label{example for the basic estimate for ns, refined}
		\begin{cases}
			-\Delta\tilde{\omega}+(\alpha\www_A+\mathbf{w}_B)\cdot\nabla \tilde{\omega}=\Delta\omega_1-\mathbf{w}_B\cdot\nabla\omega_1,\quad &\text{in }B_1,\\
			\tilde{\omega}=0,&\text{on }\partial B_1,\\
		\end{cases}
	\end{equation}
where $\we_B$ is calculated through the target function~$\omega$ by the corresponding integral operator from~(\ref{vor-ns-form}):
$$\mathbf{w}_B(z)=\frac1{2\pi i}\int_{B_1} \frac{\omega(\zeta)}{z-\zeta}\,d\xi d\eta, \qquad\zeta=\xi+i\eta.$$
Note, that in derivation of~(\ref{example for the basic estimate for ns, refined}) we essentially used  the identity~$\www_A\cdot\nabla\omega_1\equiv0$ from the~assumptions of Theorem~\ref{ex-omega1}. 

Below our strategy is quite standard: we reduce the system~(\ref{example for the basic estimate for ns, refined}) to the equation in the Hilbert space $H^1_0$ with some compact continuous operator, and then apply the Leray--Schauder fixed-point theorem (see, e.g., \cite[Chapter~3]{leraybook}; really, here 
the~calculations are much simpler since we deal with real-valued functions). The~desired a~priory estimate on~$\ooo$ will follow from the~special structure of~(\ref{example for the basic estimate for ns, refined}), see below. 

\medskip
{\sc Step 1}. We recall the standard definition of weak solution to the system~(\ref{example for the basic estimate for ns, refined}): \begin{equation}\label{notion of weak solution when constructing the example}
		\int_{B_1}\nabla\tilde{\omega}\cdot\nabla\eta-\int_{B_1}\tilde{\omega}(\alpha\www_A+\mathbf{w}_B)\cdot\nabla\eta-\int_{B_1}\omega_1 (\mathbf{w}_B\cdot\nabla\eta)+\int_{B_1}\nabla\omega_1\cdot\nabla\eta=0,\qquad \forall\eta\in H^1_0(B_1).
	\end{equation}      
	By H\"older inequality, we have that for any $\tilde{\omega}\in H^1_0(B_1)$,
	\begin{equation}\label{l4 norm for omega}
		\left|\int_{B_1}\frac{\tilde{\omega}(\zeta)}{z-\zeta}\,d\xi d\eta\right|\le\|\tilde{\omega}\|_{L^4(B_1)}\left\|\frac{1}{z-\cdot}\right\|_{L^{\frac{4}{3}}(B_2(z))}\le C\|\tilde{\omega}\|_{L^4(B_1)}
	\end{equation}
	Therefore, we have that
	\begin{equation}\label{linfty forn wb}
		|\mathbf{w}_B(z)|= \frac1{2\pi}\left|\int_{B_1}\frac{\tilde{\omega}(\zeta)+\omega_1(\zeta)}{z-\zeta}\,d\xi d\eta\right|\le C\|\tilde{\omega}\|_{L^4(B_1)}+C\|\omega_1\|_{L^4(B_1)},\quad \forall z\in B_1.
	\end{equation}
	Hence, by H\"older inequality, Poincar\'e inequality, and Sobolev embedding $H^1_0(B_1)\hookrightarrow L^4(B_1)$,
	$$\begin{aligned}
		\left|\int_{B_1}\tilde{\omega}(\alpha\www_A+\mathbf{w}_B)\cdot\nabla\eta\right|&\le \|\nabla\eta\|_{L^2(B_1)}\|\tilde{\omega}\|_{L^2(B_1)}(\alpha\|\mathbf{w}_A\|_{L^\infty(B_1)}+\|\mathbf{w}_B\|_{L^\infty(B_1)})\\
		&\le C\|\nabla\eta\|_{L^2(B_1)}\|\nabla\tilde{\omega}\|_{L^2(B_1)}(\alpha\|\www_A\|_{L^\infty(B_1)}+\|\omega_1\|_{L^4(B_1)}+\|\tilde{\omega}\|_{L^4(B_1)})\\
		&\le C\|\nabla\eta\|_{L^2(B_1)}\|\nabla\tilde{\omega}\|_{L^2(B_1)}\left(\alpha\|\www_A\|_{L^\infty(B_1)}+\|\omega_1\|_{L^4(B_1)}+\|\nabla\tilde{\omega}\|_{L^2(B_1)}\right).
	\end{aligned}$$
Thus,  for any fixed $\ooo\in H^1_0(B_1)$ the mapping $H^1_0(B_1)\ni\eta\mapsto\int_{B_1}\tilde{\omega}(\alpha\www_A+\mathbf{w}_B)\cdot\nabla\eta$ defines a bounded linear functional in $H^1_0(B_1)$. By the~Riesz representation theorem, there exists a~unique element $\vp_A\in H^1_0(B_1)$ (depending on~$\ooo$, of course) such that $$\int_{B_1}\tilde{\omega}(\alpha\mathbf{w}_A+\mathbf{w}_B)\cdot\nabla\eta=\int_{B_1}\nabla\vp_A\cdot\nabla \eta:=\langle\vp_A,\eta\rangle_{H_0^1(B_1)}\qquad\forall \eta\in H^1_0(B_1).$$
	Similarly, there exist unique elements $\vp_B, \vp_1\in H^1_0(B_1)$ such that
	$$\int_{B_1}\omega_1 (\mathbf{w}_B\cdot\nabla\eta)=\langle\vp_A,\eta\rangle_{H_0^1(B_1)}\qquad\forall \eta\in H^1_0(B_1),$$
	$$-\int_{B_1}\nabla\omega_1 \cdot\nabla\eta=\langle\vp_1,\eta\rangle_{H_0^1(B_1)}\qquad\forall \eta\in H^1_0(B_1).$$
	Thus,  if we define the nonlinear operator $T$ from the Hilbert space $H^1_0(B_1)$ to itself  by the formula $T(\ooo):=\vp_A+\vp_B+\vp_1$, then the identity (\ref{notion of weak solution when constructing the example}) takes the form 
	$$\tilde{\omega}=T(\tilde{\omega}).$$
	
	{\sc Step 2}. The continuity and compactness of the operator $T$ follow easily from the preceding embedding theorems in a standard way.
	Indeed, if $\ooo_k$ is a~bounded sequence in $H^1_0(B_1)$, when by embedding theorems it is compact in the Lebesgue space $L^4(B_1)$, so we can assume w.l.o.g. that $\|\ooo_k-\ooo_*\|_{L^4}\to 0$ for some $\ooo_*\in H^1_0(B_1)$, but then by above estimates 
$$\|\we_{B,k}-\we_{B_*}\|_{L^\infty(B_1)}\to0,$$ and this implies that linear functionals considered above converge strongly in the conjugate space to $H^1_0(B_1)$, etc. (see, e.g., \cite[Chapter~3]{leraybook} for the details). 
	
	\medskip
	{\sc Step 3}. Now it remains to prove only an~a\,priory estimate necessary for the application of the~Leray--Schauder fixed point theorem. Namely, we have to prove that 
	if $\tilde{\omega}_\lambda\in H^1_0(B_1)$ solves $\tilde{\omega}_\lambda=\lambda T(\tilde{\omega}_\lambda)$ for some real parameter~$\lambda\in[0,1]$, then 
	$\tilde{\omega}_\lambda$ is bounded in $H^1_0(B_1)$ uniformly with respect to $\lambda\in[0,1]$. 
	
	By definition, $\tilde{\omega}_\lambda$ satisfies for any $\eta\in H^1_0(B_1)$:
	$$\int_{B_1}\nabla\tilde{\omega}_\lambda\cdot\nabla\eta=\lambda\int_{B_1}\tilde{\omega}_\lambda(\alpha \mathbf{w}_A+\mathbf{w}_{B,\lambda})\cdot\nabla\eta+\lambda\int_{B_1}\omega_1 (\mathbf{w}_{B,\lambda}\cdot\nabla\eta)-\lambda\int_{B_1}\nabla\omega_1\cdot\nabla\eta.$$
	Taking $\eta=\tilde{\omega}_\lambda$ in the last identity, the first (and the largest!) term in the right hand side disappear, and we have from (\ref{linfty forn wb}) that  
	$$\begin{aligned}
		\|\nabla\tilde{\omega}_\lambda\|_{L^2(B_1)}^2\le &\lambda \|\mathbf{w}_{B,\lambda}\|_{L^\infty(B_1)} \|\omega_1\|_{L^2(B_1)}\|\nabla\tilde{\omega}_\lambda\|_{L^2(B_1)}+\lambda\|\nabla\omega_1\|_{L^2(B_1)}\|\nabla\tilde{\omega}_\lambda\|_{L^2(B_1)}\\[3pt]
		\le &C\lambda\left(\|\tilde{\omega}_\lambda\|_{L^4(B_1)}+\|\omega_1\|_{L^4(B_1)}\right) \|\omega_1\|_{L^2(B_1)}\|\nabla\tilde{\omega}_\lambda\|_{L^2(B_1)}+\lambda\|\nabla\omega_1\|_{L^2(B_1)}\|\nabla\tilde{\omega}_\lambda\|_{L^2(B_1)}\\[3pt]
		\le &C\lambda\biggl[\|\nabla\ooo_\lambda\|^2_{L^2(B_1)} \,\|\om_1\|_{L^2(B_1)} +\|\nabla\ooo_\lambda\|_{L^2(B_1)} \biggl(\|\om_1\|_{L^4(B_1)}\,\|\om_1\|_{L^2(B_1)}+\|\nabla\om_1\|_{L^2(B_1)}\biggr)\biggr].
	\end{aligned}$$
	Hence when $C\|\omega_1\|_{L^2(B_1)}\le \frac{1}{2}$, we have that 
	\begin{equation}\label{estimate for nabla tilde omega}
		\|\nabla\tilde{\omega}_\lambda\|_{L^2(B_1)}\le C\left(\|\omega_1\|_{L^4(B_1)}\|\omega_1\|_{L^2(B_1)}+\|\nabla\omega_1\|_{L^2(B_1)}\right).
	\end{equation}
	Hence by the Leray-Schauder fixed point theorem, there exists at least one solution to (\ref{vor-ns-form}). Moreover, by setting $\lambda=1$, we have 
	$$\|\nabla\omega\|_{L^2(B_1)}\le  C\left(\|\omega_1\|_{L^4(B_1)}\|\omega_1\|_{L^2(B_1)}+\|\nabla\omega_1\|_{L^2(B_1)}\right).$$
\end{proofof5}

\section{A counterexample demonstrating the estimates sharpness}
\label{sec:5-fin}

In this section, we construct the solution to (\ref{vor-ns-form}) such that
$$\left|\int_{B_1}\frac{\omega z^\bot}{|z|^2}\right|>>\left(\int_{B_1}|\omega|^2\right)^{\frac{1}{2}}.$$
From now on until the end of the section we fix $\psi_A=xy$ and $\www_A=\nabla\psi_A=(-x,y)$ with $|\mathbf{w_A}|=\sqrt{x^2+y^2}=r$. 
Since the proof is based on Theorem~\ref{ex-omega1}, 
we start from the construction of suitable boundary data $\omega_1$ that are constant along the level lines of $\psi_A$, thereby satisfying assumptions~(\ref{apr-omeg1})$_2$.

To simplify various technical aspects, we restrict ourselves to the first quadrant~$\RRR:=\{(x,y)\in\R^2:x>0,y>0\}$, where both coordinates are positive. Furthermore, the support of our function~$\omega_1$  will lie within a set 
$$B_1^{t_0}=\{(x,y)\in B_1\cap\RRR: x\cdot y>t_0\}$$
for some suitable positive $t_0>0$; that is, we stay away from the origin, where the gradient and the level lines of $\psi_A$ degenerate. First of all, we need some simple observations concerning the functions constant on level sets of $\psi_A$. 

Let $f:\R\to\R\in L^\infty(\R)$ be a real-valued function of one variable satisfying 
\begin{equation}\label{f-ass}
\begin{cases}
			f(t)=0,\quad &t\in (-\infty, t_0]\cup\bigl[\frac14,+\infty\bigr),\\[4pt]
			f(t)\ge 0,& t\in\bigl[t_0,\frac14\bigr].\\
		\end{cases}
\end{equation}
For such a function $f$, we introduce a~corresponding function of two variables $\sigma_f:B_1\subset\R^2\to\R$ putting
\begin{equation}\label{ss-ff}
\sigma_f(x,y):=\begin{cases}
			f(x\cdot y),\quad & \mbox{if \ }(x,y)\in B_1\cap\RRR, \\[4pt]
			0,& \mbox{otherwise}.\\
		\end{cases}
\end{equation}
In particular, by construction $\supp\sigma_f\subset \overline{\BT}$. As usual, we write $F\sim G$ iff there are some universal positive constants $c_1$, $c_2$ such that $c_1 F\le G\le c_2 F$.  

\begin{lemma}\label{prel-coarea}{\sl For any function $f\in L^\infty(\R)$ satisfying~(\ref{f-ass}) the following estimates hold:
\begin{equation}\label{est1-c}
	\int_{B_1}\sigma^2_f\ \sim\ \int\limits_{t_0}^{\frac{1}{4}}f^2(t)\ln\frac{1}{t}\,dt,
\end{equation}
\begin{equation}\label{est2-c}
	\left|\int_{B_1}\frac{\sigma_f \,z^\bot}{|z|^2}\right|\ \sim\  \int\limits_{t_0}^{\frac{1}{4}}\frac{f(t)}{\sqrt{t}}\,dt.
\end{equation}}\end{lemma}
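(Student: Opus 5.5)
The plan is a coarea / change of variables adapted to the level sets of $\psi_A=xy$. In the first quadrant I use coordinates $t=xy$ together with an angular-type variable. The cleanest choice is the hyperbolic parametrization
\[
x=\sqrt{t}\,e^{s},\qquad y=\sqrt{t}\,e^{-s}.
\]
Its Jacobian is $|\partial(x,y)/\partial(t,s)|=1$: indeed $dx\,dy=dt\,ds$, since the matrix has rows $\bigl(\tfrac{e^s}{2\sqrt t},\ \sqrt t e^s\bigr)$ and $\bigl(\tfrac{e^{-s}}{2\sqrt t},\ -\sqrt t e^{-s}\bigr)$, with determinant $-\tfrac12-\tfrac12=-1$. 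The constraint $|z|<1$ becomes $t\cosh(2s)<\tfrac12$, i.e.
\[
|s|<s_{\max}(t):=\tfrac12\operatorname{arccosh}\tfrac1{2t}.
\]
For $t\le\tfrac14$ we have $s_{\max}(t)\sim \ln\tfrac1t$, because $\operatorname{arccosh}u\sim\ln(2u)$ for $u\ge2$, and $\ln\tfrac1t\ge\ln4$ keeps the constants uniform.

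For (\ref{est1-c}) this immediately gives
\[
\int_{B_1}\sigma_f^2=\int_{t_0}^{1/4}f^2(t)\,2s_{\max}(t)\,dt\ \sim\ \int_{t_0}^{1/4}f^2(t)\ln\tfrac1t\,dt.
\]

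For (\ref{est2-c}), write $z^\bot/|z|^2=(-y,x)/(x^2+y^2)$. In these coordinates $x^2+y^2=2t\cosh 2s$, so
\[
\int_{B_1}\frac{\sigma_f z^\bot}{|z|^2}=\int_{t_0}^{1/4}f(t)\int_{-s_{\max}}^{s_{\max}}\frac{\sqrt t\,(-e^{-s},\,e^{s})}{2t\cosh 2s}\,ds\,dt .
\]
By the symmetry $s\mapsto -s$ the two components have equal magnitude and opposite sign, so it suffices to estimate the second component, which is nonnegative because $f\ge0$. The inner integral is
\[
\frac1{2\sqrt t}\int_{-s_{\max}}^{s_{\max}}\frac{e^{s}}{\cosh 2s}\,ds .
\]
The integrand is positive and integrable on all of $\R$, since it decays like $e^{-|s|}$. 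Moreover $s_{\max}(t)\ge s_{\max}(1/4)=0$ only marginally, so I need a uniform lower bound on $s_{\max}$. At $t=\tfrac14$ the arccosh is $0$, so near $t=\tfrac14$ the interval degenerates. This is the main obstacle.

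The upper bound is uniform: the inner integral is at most $C/\sqrt t$. For the lower bound, the interval length $2s_{\max}(t)$ tends to $0$ as $t\to\tfrac14$. I would handle this in one of two ways.

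\begin{itemize}
\item \emph{Cutoff adjustment.} Note that the paper's $f$ vanishes on $[\tfrac14,\infty)$. If necessary, I would lower the cutoff slightly, e.g.\ to $\tfrac18$, on which $s_{\max}\ge c>0$ and the inner integral is $\sim t^{-1/2}$ with universal constants.
\item \emph{Keep the degenerate weight.} Alternatively, observe that the lower bound then holds with the weight $\min(1,s_{\max}(t))/\sqrt t$, which is comparable to $1/\sqrt t$ on $[t_0,\tfrac18]$.
\end{itemize}

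So I expect the key step to be verifying this uniform two-sided bound of the inner $s$-integral. Near $t=\tfrac14$ both sides of (\ref{est1-c}) and (\ref{est2-c}) must be checked with care: as stated, the lemma might require the support condition to be read with a constant slightly below $\tfrac14$ for the lower bounds. This is harmless for the later construction, where $f$ will be concentrated near $t_0\to0$.
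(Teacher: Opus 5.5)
Your route is the paper's route made explicit. The hyperbolic parametrization $x=\sqrt t\,e^{s}$, $y=\sqrt t\,e^{-s}$ is exactly the coarea formula the paper invokes: on $\{xy=t\}$ the arc-length element is $\sqrt{2t\cosh 2s}\,ds=r\,ds$, so $ds_{\mathrm{arc}}/r=ds$.

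Several of your steps are correct as written:
\begin{itemize}
\item the Jacobian computation;
\item the identity $|z|^2=2t\cosh 2s$;
\item the two-sided bound $\tfrac12\ln\tfrac1t\le 2s_{\max}(t)=\operatorname{arccosh}\tfrac1{2t}\le\ln\tfrac1t$ for $t\le\tfrac14$, which gives (\ref{est1-c});
\item the $s\mapsto -s$ symmetry reduction for (\ref{est2-c}).
\end{itemize}
With these, your argument is more quantitative than the paper's one-line appeal to ``elementary properties of level sets''.

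However, the ``main obstacle'' you identify comes from an arithmetic error. $\operatorname{arccosh}\tfrac1{2t}$ vanishes at $t=\tfrac12$, not at $t=\tfrac14$. The value $t=\tfrac12$ is the maximum of $xy$ on $B_1$, where the level curve collapses to the point $(1/\sqrt2,1/\sqrt2)$. At $t=\tfrac14$ you get $s_{\max}(\tfrac14)=\tfrac12\operatorname{arccosh}2=\tfrac12\ln(2+\sqrt3)>0$. Since $s_{\max}$ is decreasing, $s_{\max}(t)\ge c_0:=\tfrac12\ln(2+\sqrt3)$ for all $t\le\tfrac14$. This already follows from your own claim that $s_{\max}(t)\sim\ln\tfrac1t$ on that range, which contradicts the later assertion $s_{\max}(\tfrac14)=0$. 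Consequently
\[
0<\int_{-c_0}^{c_0}\frac{e^s}{\cosh 2s}\,ds\le\int_{-s_{\max}(t)}^{s_{\max}(t)}\frac{e^s}{\cosh 2s}\,ds\le\int_{\R}\frac{e^s}{\cosh 2s}\,ds<\infty
\]
uniformly in $t\in[t_0,\tfrac14]$. Hence the inner integral is comparable to $t^{-1/2}$ with universal constants, and (\ref{est2-c}) holds exactly as stated.

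You do not need to lower the cutoff to $\tfrac18$ or introduce the degenerate weight $\min(1,s_{\max}(t))/\sqrt t$. Your suggestion that the lemma might need reinterpretation near $t=\tfrac14$ should be dropped. The cutoff $\tfrac14$ in (\ref{f-ass}) is there precisely to keep the support away from the degenerate level $t=\tfrac12$. With this correction, your proof is complete.
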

Note, that the corresponding constants in the equivalence relations here are independent of $f$, $t_0$, etc.

\begin{proof}
Recall, that by Coarea formula, for any function $g\in L^\infty(B_1)$ the  equality
$$\int\limits_{\BT}g(x,y)\,dxdy=\int\limits_{t_0}^1\biggl(\int\limits_{(x,y)\in\BT, \ xy=t}\frac{g(x,y)}{r}\,ds\biggr)\,dt$$
holds, where $r=\sqrt{x^2+y^2}$ and $ds$ means the usual integration with respect to length (=$1$-Hausdorff measure). Then the estimates (\ref{est1-c})--(\ref{est2-c}) follow easily from the elementary properties of the regular level sets (=curves) \,$\{xy=t:t> t_0\}$ within $\BT$. 
\end{proof}

\begin{lemma}\label{lemma 11} {\sl For any constant $M_0>0$ there exists a~parameter $t_0\in \bigl(0,\frac14\bigr)$ and a~smooth function $f\in C^1(\R)\cap L^\infty(\R)$ satisfying~(\ref{f-ass})  such that $f\not\equiv0$ and 
\begin{equation}\label{est3-c}M_0\int_{B_1}\sigma^2_f< \left|\int_{B_1}\frac{\sigma_f \,z^\bot}{|z|^2}\right|^2.
\end{equation}}
\end{lemma}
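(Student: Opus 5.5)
By Lemma~\ref{prel-coarea}, it suffices to find $t_0$ and $f$ satisfying~(\ref{f-ass}) with
$$\frac{\Bigl(\int_{t_0}^{1/4} f(t)\,t^{-1/2}\,dt\Bigr)^2}{\int_{t_0}^{1/4} f^2(t)\ln\frac1t\,dt}$$
as large as we wish. The constants in Lemma~\ref{prel-coarea} are universal, so a large value of this one-dimensional ratio gives~(\ref{est3-c}) with $M_0$ replaced by any prescribed multiple of the ratio. The problem is therefore a weighted Cauchy--Schwarz extremal problem on $[t_0,\frac14]$. Cauchy--Schwarz gives
$$\Bigl(\int f\,t^{-1/2}\Bigr)^2\le \int f^2\ln\tfrac1t\cdot\int \frac{dt}{t\ln\frac1t},$$
with equality for $f(t)=\bigl(\sqrt t\,\ln\frac1t\bigr)^{-1}$. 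The last integral equals $\ln\ln\frac1{t_0}-\ln\ln 4$, which tends to $+\infty$ as $t_0\to0$. This divergence is the mechanism: the supremum of the ratio is unbounded because of the logarithmic behaviour near $t=0$.

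Concretely, I will take $f_0(t)=t^{-1/2}(\ln\frac1t)^{-1}$ on $[2t_0,\frac18]$. Then both integrals equal $L:=\int_{2t_0}^{1/8}\frac{dt}{t\ln(1/t)}=\ln\ln\frac1{2t_0}-\ln\ln 8$, so the ratio is $L^2/L=L$. This can be made larger than any multiple of $M_0$ by choosing $t_0$ small.

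To meet the smoothness requirement $f\in C^1(\mathbb R)$, I will multiply by a $C^1$ cutoff $\chi$ with $0\le\chi\le1$, $\chi=1$ on $[2t_0,\frac18]$, and $\operatorname{supp}\chi\subset[t_0,\frac14]$. Set $f=\chi f_0$, where $f_0$ is the same formula extended to $[t_0,\frac14]$; it is bounded there, so $f\in L^\infty$. Then $f\ge0$, $f$ vanishes outside $(t_0,\frac14)$, and $f\not\equiv0$. The numerator is at least $L^2$, since $f=f_0$ on $[2t_0,\frac18]$ and $f\ge0$ elsewhere. The denominator is at most $\int_{t_0}^{1/4}\frac{dt}{t\ln(1/t)}\le L+C$, because the extra pieces $[t_0,2t_0]$ and $[\frac18,\frac14]$ each contribute $O(1)$ uniformly in $t_0$ (on the first, $\ln\ln$ changes by $o(1)$). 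Hence the ratio is at least $L^2/(L+C)\to\infty$.

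The step needing the most care is only bookkeeping: making sure the universal constants $c_1,c_2$ from Lemma~\ref{prel-coarea} and the boundary contributions of the cutoff do not destroy the divergence. As shown, both are $O(1)$ against the divergence $L\sim\ln\ln\frac1{t_0}$, so choosing $t_0$ small enough that $c\,L^2/(L+C)>M_0$ finishes the proof.
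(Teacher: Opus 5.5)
Your proposal is correct and follows the paper's proof: you reduce via Lemma~\ref{prel-coarea} to the one-dimensional weighted Cauchy--Schwarz problem, take the extremal profile $f(t)=\bigl(\sqrt t\,\ln\frac1t\bigr)^{-1}$, and exploit the divergence $\int_{t_0}^{1/4}\frac{dt}{t\ln(1/t)}\sim\ln\ln\frac1{t_0}$ as $t_0\to0$. Your explicit $C^1$ cutoff that equals $1$ on $[2t_0,\frac18]$, together with the check that the end pieces contribute only $O(1)$ to the denominator, is a more careful version of the paper's brief ``standard mollifying procedure'' step.
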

\begin{proof}
Note that by~(\ref{est1-c})--(\ref{est2-c}) and H\"older inequality, 
$$\left|\int_{B_1}\frac{\sigma_f \,z^\bot}{|z|^2}\right|^2\sim \biggl(\int\limits_{t_0}^{\frac{1}{4}}\frac{f(t)}{\sqrt{t}}\,dt\biggr)^2\le 
\left(\int_{t_0}^{\frac{1}{4}}\frac{1}{t\ln\frac{1}{t}}\,dt\right)\cdot\left(\int_{t_0}^{\frac{1}{4}}|f(t)|^2\ln\frac{1}{t}\,dt\right)\sim
\left(\int_{t_0}^{\frac{1}{4}}\frac{1}{t\ln\frac{1}{t}}\,dt\right)\cdot \int_{B_1}\sigma^2_f.$$
Noting that $$M_0:=\int_{t_0}^{\frac{1}{4}}\frac{1}{t\ln\frac{1}{t}}\,dt\sim\ln\ln\frac{1}{t_0}\to\infty\quad\text{as }\ t_0\to0.$$
So, if we take 
\begin{equation}\label{est5-c}f(t)=\begin{cases}
			\frac1{\sqrt{t}\ln\frac{1}{t}},& t\in\bigl[t_0,\frac14\bigr],\\[4pt]
			0,\quad &t\in (-\infty, t_0]\cup\bigl[\frac14,+\infty\bigr)\\
		\end{cases}
\end{equation}
then the H\"older inequality turns out to be equality, and we have the required estimate 
$$M_0\int_{B_1}\sigma^2_f\sim \left|\int_{B_1}\frac{\sigma_f \,z^\bot}{|z|^2}\right|^2, $$
where $M_0\to +\infty$ as $t_0\to 0$. 
Now it remains to make the function $f$ smooth and compactly supported in $\bigl[t_0,\frac14\bigr]$ using the standard one-dimensional mollifying procedure, thus we get the required result. 
\end{proof}

\medskip
Now the claim of desired Theorem~\ref{f-conj} follows directly from the following

\begin{lemma}\label{construction of omega1} {\sl For any $M_0>0$ there exist $t_0\in\bigl(0,\frac14\bigr)$, $\alpha>0$,  and a function $\omega_1\in H^1(B_1)$ having support in $\BT$ and satisfying~(\ref{apr-omeg1}) such that the corresponding solution~$\omega$ to~(\ref{vor-ns-form}) satisfies
\begin{equation}\label{final-est-eq}
\left|\int_{B_1}\frac{\omega z^\bot}{|z|^2}\right|^2> M_0\,\|\omega\|^2_{L^2(B_1)}.
\end{equation}
}
\end{lemma}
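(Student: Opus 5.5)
The plan is to combine Lemma~\ref{lemma 11}, Theorem~\ref{ex-omega1}, and a limiting argument $\alpha\to+\infty$. Given $M_0$, apply Lemma~\ref{lemma 11} with $4M_0$ in place of $M_0$. This produces $t_0$ and a $C^1$ function $f$ compactly supported in $(t_0,\frac14)$ satisfying (\ref{est3-c}) with constant $4M_0$.

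Put $\omega_1=\varepsilon\,\sigma_f$ with $\varepsilon>0$ small. The following properties are then immediate:
\begin{itemize}
\item $\omega_1$ is $C^1$ on $\overline{B}_1$ and supported in $\BT$, since $\{xy\ge t_0\}$ stays away from the axes.
\item $\www_A\cdot\nabla\omega_1=0$, because $\omega_1$ is a function of $\psi_A=xy$.
\item Taking $\varepsilon$ small gives $\|\omega_1\|_{L^2}\le C_*$.
\item Inequality (\ref{est3-c}) is homogeneous of degree two, so it survives the scaling by $\varepsilon$.
\end{itemize}
Theorem~\ref{ex-omega1} then yields, for every $\alpha>0$, a solution $\omega_\alpha=\omega_1+\ooo_\alpha$ of (\ref{vor-ns-form}) with $\ooo_\alpha\in H^1_0(B_1)$ and $\|\nabla\ooo_\alpha\|_{L^2}\le K$. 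Here $K$ does not depend on $\alpha$.

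The key step is to show $\ooo_\alpha\to0$ in $L^2$ as $\alpha\to\infty$. Since $\|\nabla\ooo_\alpha\|_{L^2}\le K$, Rellich compactness lets us pass to a subsequence with $\ooo_\alpha\rightharpoonup\ooo_*$ weakly in $H^1_0$ and strongly in $L^4$. Divide the weak formulation (\ref{notion of weak solution when constructing the example}) by $\alpha$. Every term except $\int\ooo_\alpha\,\www_A\cdot\nabla\eta$ is bounded independently of $\alpha$, using (\ref{linfty forn wb}) and the uniform $L^4$ bounds. Letting $\alpha\to\infty$ therefore gives $\int\ooo_*\,\www_A\cdot\nabla\eta=0$ for all $\eta\in H^1_0$. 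Because $\operatorname{div}\www_A=0$, this says $\www_A\cdot\nabla\ooo_*=0$ in the distributional sense, so $\ooo_*$ is constant along the hyperbolas $xy=t$.

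Each such level curve meets $\partial B_1$, and $\ooo_*\in H^1_0$ vanishes on $\partial B_1$. Hence $\ooo_*=0$. The justification is a coarea/trace argument along the regular curves away from the origin; the origin is a null set and does not matter. This is the step I expect to be the main obstacle: one has to make rigorous that an $H^1_0$ function annihilated by $\www_A\cdot\nabla$ vanishes. I would do it in the coordinates $(t,s)=(xy,\ \text{arclength})$ on each quadrant minus a neighborhood of the axes, where $\ooo_*$ is a function of $t$ alone. Its trace on the arc of $\partial B_1$ is zero, so it vanishes on each quadrant, and letting the neighborhood shrink gives the claim. Since the limit is independent of the subsequence, the whole family satisfies $\|\ooo_\alpha\|_{L^2}\to0$.

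Conclusion. The map $\omega\mapsto\int\omega z^\bot/|z|^2$ is continuous on $L^2(B_1)$ only up to a logarithm at the origin. Instead, use the $L^4$ convergence together with $z^\bot/|z|^2\in L^{4/3}(B_1)$, which gives
\[
\int_{B_1}\frac{\omega_\alpha z^\bot}{|z|^2}\to\int_{B_1}\frac{\omega_1z^\bot}{|z|^2},
\qquad
\|\omega_\alpha\|_{L^2}\to\|\omega_1\|_{L^2}.
\]
From (\ref{est3-c}) with constant $4M_0$ and $\omega_1\not\equiv0$, the strict inequality (\ref{final-est-eq}) with constant $M_0$ follows for all sufficiently large $\alpha$. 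Fixing such an $\alpha$ completes the proof.
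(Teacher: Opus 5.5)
Your proposal is correct and follows essentially the same route as the paper. Both arguments scale the function from Lemma~\ref{lemma 11} to meet (\ref{apr-omeg1}), use the $\alpha$-independent bound of Theorem~\ref{ex-omega1}, divide the weak formulation by $\alpha_k$ to obtain $\int\omega_\infty\,\widehat{\mathbf{w}}_A\cdot\nabla\eta=0$, identify the limit with $\omega_1$ via constancy along the hyperbolas $xy=t$ together with the boundary trace, and pass to the limit on both sides. Your additions (working with $\tilde\omega\in H^1_0$, the margin $4M_0$, the explicit $L^4$--$L^{4/3}$ pairing for the circulation integral, and the sketch of why a streamline-invariant $H^1_0$ function vanishes) only make explicit what the paper states briefly.
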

\begin{proof}	
Fix a big parameter $M_0>0$. Take a smooth non-negative compactly supported $f:\R\to\R$ with corresponding $t_0>0$ from the previous Lemma. 
Put $$\omega_1:= \sigma_f.$$
By construction, $\omega_1\in H^1(B_1)$,  \ $\www_A\cdot\nabla\omega_1\equiv0$, \ and \ $\supp\omega_1\subset\overline{\BT}$; moreover, from~(\ref{est3-c}) we get
\begin{equation}\label{fee-1}
\left|\int_{B_1}\frac{\omega_1 z^\bot}{|z|^2}\right|^2> M_0\,\|\omega_1\|^2_{L^2(B_1)}.
\end{equation}
Multiplying $\omega_1$ by a small positive constant, we keep all the previous properties having in addition that $\|\omega_1\|_{L^2}$ is small, i.e., the assumptions~(\ref{apr-omeg1}) are fulfilled as well. So we can apply Theorem~\ref{ex-omega1} with an increasing sequence of parameters $\alpha=\alpha_k\to+\infty$. 
Consider the corresponding solutions $\omega_k$ to the vorticity NS-system~(\ref{vor-ns-form}). By assertion of Theorem~\ref{ex-omega1} we have an~apriory estimate
\begin{equation}\label{fee-2}
\|\omega_k\|_{H^1(B_1)}\le C,
\end{equation}
so we can assume without loss of generality that 
\begin{equation}\label{fee-3}
\omega_{k}\rightharpoonup\omega_\infty\mbox{ \  in  \ }H^1(B_1)\mbox{ \  as  \ }k\to\infty 
\end{equation}
for some~$\omega_\infty\in H^1(B_1)$. 
Consequently, by Sobolev Imbedding Theorems, 
\begin{equation}\label{fee-4}
\|\omega_{k}-\omega_\infty\|_{L^q(B_1)}\to0\qquad\forall q\in[1,+\infty).
\end{equation}
By construction, 
$$\int_{B_1}\nabla\omega_{k}\cdot\nabla\eta-\alpha_k\int_{B_1}\omega_{k} \www_A\cdot\nabla\eta-\int_{B_1}\omega_{k} \mathbf{w}_{B,k}\cdot\nabla\eta=0,\quad\forall \eta\in H^1_0(B_1).$$
Dividing both sides by $\alpha_k$ and then letting $k\to\infty$, we obtain
$$\int_{B_1}\omega_\infty \www_A\cdot\nabla\eta=0,\quad \forall\eta\in H^1_0(B_1).$$
This means that $\omega_\infty  $ is constant along the stream lines $\psi_A=xy=t$. Moreover, by the compact embedding $H^1(B_1)\hookrightarrow L^2(\partial B_1)$, we know that $\omega_\infty=\omega_1$ on $\partial B_1$. Hence we get $\omega_\infty=\omega_1$;
in particular 
\begin{equation}\label{fee-5}
\|\omega_{k}-\omega_1\|_{L^q(B_1)}\to0\qquad\forall q\in[1,+\infty).
\end{equation}
The last convergence easily implies 
\begin{equation}\label{fee-6}
\|\omega_{k}\|_{L^2(B_1)}\to\|\omega_1\|_{L^2(B_1)},
\end{equation}
\begin{equation}\label{fee-7}
\left|\int_{B_1}\frac{\omega_k z^\bot}{|z|^2}\right|\to \left|\int_{B_1}\frac{\omega_1 z^\bot}{|z|^2}\right|.
\end{equation}
Thus, by (\ref{fee-1}) for the considered solutions $\omega_k$ to the vorticity NS-system~(\ref{vor-ns-form}) we have the required property 
\begin{equation}\label{fee-ii}
\left|\int_{B_1}\frac{\omega_k z^\bot}{|z|^2}\right|^2> M_0\,\|\omega_k\|^2_{L^2(B_1)}
\end{equation}
for $k$ large enough. That means, that both Lemma~\ref{final-est-eq} and Theorem~\ref{f-conj} are completely proved. 
\end{proof}

\begin{rem}\label{concl-rem}It is well known that a constant shift in vorticity does not alter the velocity mean values; in particular, the key identity~(\ref{representation of the difference of the average by omega}) remains valid if on the right-hand side we replace $\omega$ with $\omega-c$, where $c$ is an arbitrary constant. Consequently, the assertions of Theorems~\ref{first basic estimate in annulus theorem}--\ref{first basic estimate in balls theorem} also remain valid if we replace $D_\omega$ with 
$$D_{\omega,c}=\int_{\Omega_{r_1,r_2}}|\omega-c|^2$$ (for an arbitrary $c$) in their statements. The proofs proceed analogously, with obvious modifications.
By choosing an optimal $c$, one can use the classical variance instead of $D_\omega$ in the statements of Theorems~\ref{first basic estimate in annulus theorem}--\ref{first basic estimate in balls theorem}:
$$\tilde{D}_{\omega}=\int_{\Omega_{r_1,r_2}}|\omega-\bar\omega|^2,$$ 
where $\bar\omega$ is the corresponding vorticity mean value:
$$\bar\omega:=\frac1{\bigl|\Omega_{r_1,r_2}\bigr|}\ \int_{\Omega_{r_1,r_2}}\omega.$$
\end{rem}
\bigskip
\noindent
{\color{black}
 {\bf Acknowledgment}.
The authors are deeply grateful to Dr. Xiao Ren for the valuable discussions. In particular, he suggested first considering functions that are constant along the level sets of some fixed harmonic function, which turns out to be very useful in constructing counterexamples.}


\smallskip
\par\noindent
{\bf Conflict of Interest}. The authors declare that they have no conflict of interest.
\bibliographystyle{plain}

\end{document}